%
%
\documentclass[12pt]{article}
\usepackage{amssymb,amsthm,amsmath,amsfonts,latexsym,tikz,hyperref,shuffle,ytableau}
\usepackage[hmargin=1in,vmargin=1in]{geometry}
\usepackage{enumitem}

\newcommand{\ben}{\begin{enumerate}}
\newcommand{\een}{\end{enumerate}}
\newcommand{\ble}{\begin{lem}}
\newcommand{\ele}{\end{lem}}
\newcommand{\bth}{\begin{thm}}
\renewcommand{\eth}{\end{thm}}
\newcommand{\bpr}{\begin{prop}}
\newcommand{\epr}{\end{prop}}
\newcommand{\bco}{\begin{cor}}
\newcommand{\eco}{\end{cor}}
\newcommand{\bcon}{\begin{conj}}
\newcommand{\econ}{\end{conj}}
\newcommand{\bde}{\begin{defn}}
\newcommand{\ede}{\end{defn}}
\newcommand{\bex}{\begin{exa}}
\newcommand{\eex}{\end{exa}}
\newcommand{\barr}{\begin{array}}
\newcommand{\earr}{\end{array}}
\newcommand{\btab}{\begin{tabular}}
\newcommand{\etab}{\end{tabular}}
\newcommand{\beq}{\begin{equation}}
\newcommand{\eeq}{\end{equation}}
\newcommand{\bea}{\begin{eqnarray*}}
\newcommand{\eea}{\end{eqnarray*}}
\newcommand{\bal}{\begin{align*}}
\newcommand{\bce}{\begin{center}}
\newcommand{\ece}{\end{center}}
\newcommand{\bpi}{\begin{picture}}
\newcommand{\epi}{\end{picture}}
\newcommand{\bpp}{\begin{picture}}
\newcommand{\epp}{\end{picture}}
\newcommand{\bfi}{\begin{figure} \begin{center}}
\newcommand{\efi}{\end{center} \end{figure}}
\newcommand{\bprf}{\begin{proof}}
\newcommand{\eprf}{\end{proof}\medskip}

\newcommand{\bsl}{\begin{slide}{}}
\newcommand{\esl}{\end{slide}}
\newcommand{\bfr}{\begin{frame}}
\newcommand{\efr}{\end{frame}}

\newcommand{\comp}{\models}

\newcommand{\hqed}{\hfill \qed}

\newcommand{\eqed}[1]{$\textcolor{white}{\qed}\hfill{\dil#1}\hfill\qed$}
\newcommand{\eqqed}[1]{$\rule{1ex}{0ex}\hfill{\dil#1}\hfill\qed$}

\newcommand{\ol}{\overline}

\newcommand{\hso}[1]{\hspace{-1pt}}
\newcommand{\vs}[1]{\vspace{#1}}
\newcommand{\qmq}[1]{\quad\mbox{#1}\quad}

\newcommand{\emp}{\emptyset}

\newcommand{\sbs}{\subset}
\newcommand{\sbe}{\subseteq}

\newcommand{\spe}{\supseteq}



\newcommand{\ptn}{\vdash}

\newcommand{\jn}{\vee}

\def\<{\langle}
\def\>{\rangle}

\newcommand{\ree}[1]{(\ref{#1})}

\newcommand{\ra}{\rightarrow}

\newcommand{\al}{\alpha}
\newcommand{\be}{\beta}
\newcommand{\ga}{\gamma}
\newcommand{\de}{\delta}

\newcommand{\io}{\iota}
\newcommand{\ka}{\kappa}
\newcommand{\la}{\lambda}

\newcommand{\si}{\sigma}

\newcommand{\ze}{\zeta}

\newcommand{\bx}{{\bf x}}

\newcommand{\bbR}{{\mathbb R}}

\newcommand{\fS}{{\mathfrak S}}

\DeclareMathOperator{\Av}{Av}


\DeclareMathOperator{\Des}{Des}

\DeclareMathOperator{\sh}{sh}

\DeclareMathOperator{\SYT}{SYT}

\newcommand{\dil}{\displaystyle}

\newtheorem{thm}{Theorem}[section]
\newtheorem{prop}[thm]{Proposition}
\newtheorem{cor}[thm]{Corollary}
\newtheorem{lem}[thm]{Lemma}
\newtheorem{conj}[thm]{Conjecture}
\newtheorem{exa}[thm]{Example}

\newcommand{\lab}{\ol{\la}}

\newcommand{\fSb}{\ol{\fS}}

\newcommand{\fR}{{\mathfrak R}}

\DeclareMathOperator{\std}{std}

\DeclareMathOperator{\lis}{lis}
\DeclareMathOperator{\lds}{lds}

\DeclareMathOperator{\iDes}{iDes}
\DeclareMathOperator{\RS}{RS}

\usepackage{accents}
\newlength{\dhatheight}

\newcommand{\overdoublerightarrow}[1]{\stackrel{\twoheadrightarrow}{#1}}    
\newcommand{\overdoubleleftarrow}[1]{\stackrel{\twoheadleftarrow}{#1}}

\usepackage{mathtools}

\begin{document}
\pagestyle{plain}

\title{Revisiting pattern avoidance and quasisymmetric functions
}
\author{Jonathan Bloom\\[-5pt]
\small Department of Mathematics, Lafayette College,\\[-5pt]
\small Easton, PA 18042, USA, {\tt bloomjs@lafayette.edu}
and\\
Bruce E. Sagan\\[-5pt]
\small Department of Mathematics, Michigan State University,\\[-5pt]
\small East Lansing, MI 48824, USA, {\tt sagan@math.msu.edu}
}

\date{\today\\[10pt]
	\begin{flushleft}
	\small Key Words: pattern avoidance, quasisymmetric function, shuffle, Schur function
	                                       \\[5pt]
	\small AMS subject classification (2010):   05E05  (Primary) 05A05  (Secondary)
	\end{flushleft}}

\maketitle

\begin{abstract}
Let $\fS_n$ be the $n$th symmetric group.  Given a set of permutations $\Pi$ we denote by $\fS_n(\Pi)$ the set of permutations in $\fS_n$ which avoid $\Pi$ in the sense of pattern avoidance.  Consider the generating function $Q_n(\Pi)=\sum_\si F_{\Des\si}$ where the sum is over all $\si\in\fS_n(\Pi)$ and $F_{\Des\si}$ is the fundamental quasisymmetric function corresponding to the descent set of 
$\si$.  Hamaker, Pawlowski, and Sagan introduced $Q_n(\Pi)$ and studied its properties, in particular, finding criteria for when this quasisymmetric function is symmetric or even Schur nonnegative for all $n\ge0$.  The purpose of this paper is to continue their investigation answering some of their questions, proving one of their conjectures, as well as considering other natural questions about $Q_n(\Pi)$.  In particular we look at $\Pi$ of small cardinality, superstandard hooks, partial shuffles, Knuth classes, and a stability property.
\end{abstract}

\section{Introduction}

Let $\fS_n$ denote the symmetric group of all permutations $\pi=\pi_1\pi_2\dots\pi_n$ of the set $[n]:=\{1,2,\dots, n\}$.  We sometimes insert commas between the elements of $\pi$ or enclose the permutation in parentheses to improve readability.  We also use the notation 
$[m,n]=\{m,m+1,\dots,n\}$.   Given any sequence of distinct real numbers $\si$ its {\em standardization}, $\std\si$, is the permutation obtained by replacing its smallest element by $1$, its next smallest by $2$, and so forth.  We say that $\si\in\fS_n$ {\em contains
$\pi\in\fS_k$ as a pattern} if there is some subsequence $\si'$ of $\si$ with $\std\si'=\pi$.  If no such subsequence then $\si$ {\em avoids} $\pi$.  For a set of permutations $\Pi$ we let
$$
\fS_n(\Pi)= \{\si\in\fS_n \mid \text{$\si$ avoids every $\pi\in\Pi$}\}
$$
and
$$
\fSb_n(\Pi)=\fS_n-\fS_n(\Pi)= \{\si\in\fS_n \mid \text{$\si$ contains some $\pi\in\Pi$}\}.
$$
We omit the set braces in $\Pi$ if it contains only one permutation.  For example, $\si=25143$ contains $\pi=132$ because 
$\std 254=132$ but $\si\in\fS_n(123)$ since $\si$ contains no increasing subsequence with three elements.
More information about pattern avoidance can be found in the book of B\'ona~\cite{bon:cp}.

Let $\bx=\{x_1,x_2,\dots\}$ be a countably infinite set of variables.  An element of the formal power series ring $\bbR[[\bx]]$ is a {\em symmetric function} if it is of bounded degree and invariant under permutations of the variables.  Bases for the vector space of symmetric functions homogeneous of degree $n$ are indexed by integer partitions $\la=(\la_1,\la_2,\dots,\la_l)$ of $n$ which we denote 
$\la\ptn n$.  We use Greek letters near the middle of the alphabet to denote partitions and also use the multiplicity notation $i^{k_i}$ if a part $i$ of $\la$ is repeated $k_i$ times.   In particular, we will be interested in the basis $m_\la$ of monomial symmetric functions which is obtained by symmetrizing the monomial $x_1^{\la_1} x_2^{\la_2} \dots x_l^{\la_l}$, as well as the Schur functions $s_\la$ about which we say more below.  As an example
$$
m_{(2,1)}=x_1^2 x_2 + x_2^2 x_1 + x_1^2 x_3 + x_3^2 x_1  + x_2^2 x_3 + x_3^2 x_2 + \cdots .
$$
For information about symmetric functions as well as related material concerning Young tableaux and the Robinson-Schensted correspondence  (which we use throughout) the reader can consult the texts of Sagan~\cite{sag:sym} or Stanley~\cite{sta:ec2}.

An element of $\bbR[[\bx]]$  is {\em quasisymmetric} if it is invariant under bijections between subsets of the variables which preserve the order of the subscripts.  
The {\em quasisymmetric functions} are those power series which are quasisymmetric and of bounded degree.  The were first explicitly introduced by Gessel~\cite{ges:mpi} and have since found many applications, see~\cite{sta:ec2}.  Bases for the vector space of quasisymmetric functions of degree $n$ are indexed by composition (ordered partitions) $\al=(\al_1,\al_2,\dots,\al_l)$ of $n$ and we use the notation $\al\comp n$ as well as multiplicity notation.  To distinguish them from partitions, we use letters from the beginning of the Greek alphabet for compositions.  The monomial quasisymmetric $M_\al$ function is gotten by quasisymmetrizing the monomial
$x_1^{\al_1} x_2^{\al_2} \dots x_l^{\al_l}$, for example
$$
M_{(1,2)} = x_1 x_2^2 + x_1 x_3^2 + x_2 x_3^2 + \cdots.
$$
Note that 
\beq
\label{m-M}
m_\la=\sum_\al M_\al
\eeq
where the sum is over all $\al$ obtained by rearranging the parts of $\la$.

There is another important basis for the quasisymmetric functions which we will be using.  To define it, note that there is a bijection between compositions $\al\comp n$ and subsets $S\sbe [n-1]$ given by
\beq
\label{al-S}
(\al_1,\al_2,\dots,\al_l)\mapsto \{\al_1,\ \al_1+\al_2,\ \dots,\ \al_1+\al_2+\dots+\al_{\ell-1}\}.
\eeq
The {\em fundamental quasisymmetric function} associated with $S\sbe[n-1]$ is
$$
F_S = \sum x_{i_1} x_{i_2} \dots x_{i_n}
$$
where the sum is over indices satisfying $i_1\le i_2\le\dots\le i_n$ and $i_j<i_{j+1}$ if $j\in S$.  To illustrate, if $S=\{1\}\sbe[2]$ then
$$
F_S =  x_1 x_2^2 + x_1 x_3^2 + x_2 x_3^2 + \dots + x_1 x_2 x_3 + x_1 x_2 x_4 + x_1 x_3 x_4+ x_2 x_3 x_4 + \cdots.
$$
We also denote $F_S$ by $F_\al$ if $S$ corresponds to $\al$  under the bijection above.  The expansion of a fundamental quasisymmetric function in terms of monomials is
\beq
\label{F-M}
F_\al=\sum_{\be\le\al} M_\be
\eeq
where $\be\le\al$ means that $\be$ is a refinement of $\al$.  In the example above we see that $F_{(1,2)}=M_{(1,2)}+M_{(1^3)}$.

We study certain quasisymmetric functions related to pattern avoidance which were introduced by Hamaker, Pawlowski, and Sagan~\cite{hps:paq}.  
Related work has been done by Adin and Roichman~\cite{ar:mcd} and by Elizalde and Roichman~\cite{er:ap,er:sap}.
A permutation $\si\in\fS_n$ has {\em descent set}
$$
\Des\si = \{i \mid \si_i>\si_{i+1}\}\sbe [n-1].
$$
Given a set of permutations $\Pi$, define
$$
Q_n(\Pi)=\sum_{\si\in\fS_n(\Pi)} F_{\Des\si}.
$$
In~\cite{hps:paq} they found many interesting $\Pi$ such that  for all $n$ the function $Q_n(\Pi)$ is symmetric.  In that case, they were also often able to show that $Q_n(\pi)$ is {\em Schur nonnegative} in that the coefficients of its expansion in the Schur basis are nonnegative.  Our main motivation for the present work is to answer some of the questions asked by Hamaker-Pawlowski-Sagan and to prove one of their conjectures.   

Our work will be simplified by using certain symmetries of permutations.  A permutation $\pi=\pi_1\pi_2\dots\pi_k$ has {\em reversal} 
$\pi^r=\pi_k\dots\pi_2\pi_1$ and {\em  complement} $\pi^c=k+1-\pi_1,k+1-\pi_2,\dots,k+1-\pi_k$.  
We apply these operations to sets of partitions by applying them to each individual element of the set.
Also, given a partition
 $\la=(\la_1,\dots,\la_l)$ we use the notation $\la^t= (\la_1^t,\dots,\la_m^t)$ for its {\em transpose} given be reflecting the Young diagram for $\la$ across the diagonal.  We write our Young diagrams in English notation with the first row on the top.  We also give coordinates to elements of a Young diagram as in a matrix.
\bpr[\cite{hps:paq}]
\label{rc}
If $Q_n(\Pi)$ is symmetric then so are $Q_n(\Pi^r)$ and $Q_n(\Pi^c)$.  In particular, if $Q_n(\Pi)=\sum_\la c_\la s_\la$ for certain coefficients $c_\la$ then

\vs{10pt}

\eqed{
Q_n(\Pi^r)=Q_n(\Pi^c)=\sum_\la c_\la s_{\la^t}.
}
\epr

We make heavy use of  the following result of Gessel~\cite{ges:mpi}.  Suppose that $P$ is a {\em standard Young tableau (SYT) of shape $\la\ptn n$}, that is, a filling of the Young diagram of $\la$ with the numbers in $[n]$ so that rows and columns increase.  
We indicate that $P$ has shape $\la$ by writing $\sh P=\la$.
Let 
$$
\SYT(\la)=\{P \mid \sh P =\la\}
$$
and $f^\la=\#\SYT(\la)$ where the hash sign denotes cardinality.  The {\em descent set} of $P$ is
$$
\Des P = \{ i \mid \text{$i+1$ appears in a lower row than $i$ in $P$}\}.
$$
This notion permits one to expand the Schur functions in terms of the fundamental quasisymmetric functions.
\bth[\cite{ges:mpi}]
\label{ges}
For any $\la$ we have

\vs{10pt}

\eqqed{
s_\la = \sum_{Q\in\SYT(\la)} F_{\Des Q}.
}
\eth

Certain properties of the Robinson-Schensted correspondence will be crucial.  We only review the ones we need here and the interested reader can find more detail in~\cite{sag:sym,sta:ec2}.  The Robinson-Schensted map is a bijection
$$
\RS:\fS_n\ra\bigcup_{\la\ptn n} \SYT(\la)\times\SYT(\la).
$$
If $\RS(\pi)=(P,Q)$ then we write $P=P(\pi)$ and $Q=Q(\pi)$ and call $P$ and $Q$ the {\em $P$-tableau} and {\em $Q$-tableau} of $\pi$, respectively.
We need the following properties of the map $\RS$.
\bth
\label{RS}
Suppose $\RS(\pi)=(P,Q)$.
\ben
\item[(a)] $\Des\pi=\Des Q$.
\item[(b)] If $\sh P=\la$ then $\la_1$ is the length of a longest increasing subsequence of $\pi$.
\item[(c)] $P(\pi^r) = (P(\pi))^t$.
\item[(d)] $\RS(\pi^{-1}) = (Q,P)$. \hqed
\een
\eth

Call two permutations $\pi,\si$ {\em Knuth equivalent} if $P(\pi)=P(\si)$.  Given an SYT denote by $K(P)$ the Knuth equivalence class of all permutations with $P(\pi)=P$.  Similarly, if $\la$ is a partition we let 
$$
K(\la)=\{\pi \mid \sh P(\pi) =\la\}.
$$
The following result follows easily from Theorem~\ref{ges} and Theorem~\ref{RS} (a).
\bco[\cite{hps:paq}]
\label{knuth}
Suppose $P\in\SYT(\la)$.
\ben
\item[(a)] $\sum_{\pi\in K(P)} F_{\Des\pi} = s_\la$.
\item[(b)] $\sum_{\pi\in K(\la)} F_{\Des\pi}  = f^\la s_\la$. \hqed
\een
\eco

The rest of this paper is structured as follows.  In the next section we determine which $\Pi$ of cardinality $\#\Pi\le 2$ have $Q_n(\Pi)$ symmetric for all $n$.  
The following section answers a question of Hamaker-Pawlowski-Sagan concerning the coefficients in the Schur expansion of $Q_n(K(P))$ where $K(P)$ is the Knuth class of a superstandard hook tableau $P$.
Section~\ref{ps} is devoted to proving a conjecture  in~\cite{hps:paq} about $Q_n(\Pi)$ where $\Pi$ is a certain variant of a shuffle set called a partial shuffle.  In Section~\ref{pkc} we study $\Pi$ such that $\fS_n(\Pi)$ is a union of Knuth classes for all $n$ (which implies that $Q_n(\Pi)$ is Schur nonnegative).  In so doing, we provide a simpler proof of a theorem in~\cite{hps:paq} when $\Pi=K(P)$ for a single SYT $P$ and also answer a question asked by the authors about the case when $\Pi$ is a union of two Knuth classes.  We end with a section about stability results.

\section{Pattern sets of small size}
\label{pss}

In this section we answer the question: for which $\Pi\sbe\fS_k$ with $\#\Pi=1$  or $2$ is $Q_n(\Pi)$ symmetric for all $n$?  It turns out that this occurs exactly when $\Pi\sbe \{\io_k, \delta_k\}$ where $\io_k$ and $\de_k$ are the increasing and decreasing permutations in $\fS_k$, respectively.  We need the following result which  follows easily from Theorem~\ref{RS} (b) and (c) and Corollary~\ref{knuth} (b).
\ble[\cite{hps:paq}]
\label{iode}
For $n\ge0$ we have
\begin{align*}
Q_n(\emp)&=\sum_{\la} f^\la s_\la,\\
Q_n(\io_k)&=\sum_{\la_1<k} f^\la s_\la,\\
Q_n(\de_k)&=\sum_{\la_1^t<k} f^\la s_\la,
\end{align*}
where all three sums are over $\la\ptn n$ together with any additional restriction noted in the summation.\hqed
\ele

We now turn to the case $\#\Pi=1$. 

\bth
\label{Pi=1}
Suppose $\#\Pi=1$.  Then $Q_n(\Pi)$ is symmetric if and only if $\Pi=\{\io_k\}$ or $\Pi=\{\de_k\}$ for some $k$.
\eth
\bprf
The reverse implication follows from Proposition~\ref{iode}.  For the other direction suppose, towards a contradiction, that $Q_n(\pi)$ is symmetric for all $n$ but $\pi\neq\io_k, \de_k$.  Since $\fS_k(\pi) = \fS_k-\{\pi\}$ we have
$$
Q_k(\pi) = Q_k(\emp)-F_{\Des\pi}.
$$
From our assumption and the previous lemma we have $Q_k(\pi)$ and $Q_k(\emp)$ are symmetric, so the same must be true of $F_{\Des\pi}$ where $\Des\pi\sbe[k-1]$.  But $\pi\neq\io_k, \de_k$ so we must have $\Des\pi\neq\emp,[k-1]$.  It follows that there are $a,b\in [k-1]$ such that $a\in\Des\pi$ and $b\not\in\Des\pi$.  Now translate this statement  from subsets of $[k-1]$ to compositions of $k$ using~\ree{al-S}: if $\Des\pi$ corresponds to a composition $\al$ then there are two compositions $\be,\ga$ which are both rearrangements of the partition $\la=(2,1^{k-2})$ such that $\be$ refines $\al$ but $\ga$ does not.  Considering the expansion of 
$F_\al$ into monomial quasisymmetric functions in~\ree{F-M}, we see from~\ree{m-M} that we have some but not all the terms which would be needed to give the monomial symmetric function $m_\la$.  This contradiction completes the proof.
\eprf

We now consider the case $\#\Pi=2$.
\ble
\label{Pi=2spec}
If $\Pi=\{\io_k,\de_l\}$ for any $k,l\ge1$ then $Q_n(\Pi)$ is symmetric and Schur nonnegative for all $n$.
\ele
\bprf
We have
$$
\fSb_n=\fSb_n(\io_k)\cup\fSb_n(\de_l)-\{\si\in\fS_n \mid \text{$\si$ contains both $\io_k$ and $\de_l$}\}.
$$
We know from Lemma~\ref{iode} that the generating function for the first two sets on the right-hand side are symmetric and Schur nonnegative.  So it suffices to prove that the same is true of the third.  But by Theorem~\ref{RS} (b) and (c)
$$
\{\si\in\fS_n \mid \text{$\si$ contains both $\io_k$ and $\de_l$}\}=\biguplus_\la K(\la)
$$
where the disjoint union is over all  $\la\ptn n$ satisfying $\la_1\ge k$ and $\la_1^t\ge l$.  Applying Corollary~\ref{knuth} (b) completes the proof.
\eprf

For the next result we make use of the lattice $C_k$ of compositions of $k$ ordered by refinement.  So $\al\le\be$ and other notations refer to this partial order.  We also sometimes use the notation $M(\al)$ for $M_\al$ for readability.
\bth
Suppose $k\ge4$ and $\Pi\sbe\fS_k$ with $\#\Pi=2$.  Then $Q_n(\Pi)$ is symmetric for all $n$ if and only if $\Pi=\{\io_k,\de_k\}$.
\eth
\bprf
The backwards direction follows from the previous lemma.  For the forward direction, it is easy to check by computer that this is true for $k=4$, so we assume that $k\ge5$.   There are now two cases depending on 
$|\Pi\cap \{\io_k,\de_k\}|$.   

First consider $\Pi=\{\pi,\de_k\}$ where $\pi\in\fS_k-\{\io_k\}$.  The other possibility when $|\Pi\cap \{\io_k,\de_k\}|=1$ is handled similarly.  If $Q_k(\Pi)$ is symmetric then so is $F_{\Des\pi}+F_{[k-1]}= F_{\Des\pi}+s_{1^k}$.  It follows that $F_{\Des\pi}$ is symmetric.  But then $\pi\in\{\io_k,\de_k\}$ by Theorem~\ref{Pi=1}, which contradicts our choice of $\Pi$.

Now assume that $\Pi=\{\pi,\si\}$ with $\Pi\cap \{\io_k,\de_k\}=\emp$.  As in the previous paragraph, the fact that $Q_k(\Pi)$ is symmetric implies that so if $F_\al+F_\be$ where $\al=\Des\pi$ and $\be=\Des\si$.  Label the atoms of $C_k$ as 
$\al_i=1^{i-1} 2 1^{k-i-1}$ for $1\le i< k$.  Expanding the fundamental quasisymmetric function sum  in terms of monomial quasisymmetrics we see that $\sum M(\al_i) + \sum M(\al_j)$ is symmetric where the two sums are over the sets define by
$$
A=\{\al_i \mid \al_i\le\al\}  \qmq{and} B=\{\al_j \mid \al_j\le\be\}.
$$
Symmetry and the fact that $\pi,\si\neq\io_k,\de_k$ imply that $A$ and $B$ are disjoint, nonempty, and $A\uplus B=\{\al_1,\dots,\al_{k-1}\}$.

Without loss of generality we can assume that $\al_1\in A$.  Since $B\neq\emp$, there is an  index $i$ such that $\al_i\in A$ and 
$\al_{i+1}\in B$.  Let $i$ be the minimum such index.
If $i\ge2$ then $\al_1,\al_2\in A$ and so $M(\al_1\jn\al_2) = M(3 1^{k-3})$ is in the expansion of $F_\al$.  But $\al_i\in A$ and 
$\al_{i+1}\in B$ which implies that $M(\al_i\jn\al_{i+1}) = M(1^{i-1} 3 1^{k-i-2})$ is not in the expansion of $F_\al+F_\be$.  It follows that this expansion is not symmetric which is a contradiction.  So it must be that $i=1$. 

We have shown that $\al_1\in A$ implies $\al_2\in B$.  Similarly, $\al_2\in B$ implies $\al_3\in A$, and so forth.  It follows that
$$
A=\{\al_1,\al_3,\dots\} \qmq{and} B=\{\al_2,\al_4,\dots\}.
$$
Since $k\ge5$ we have $\#A\ge2$ and $\#B\ge2$.  Thus  $M(\al_1\jn\al_3)=M(2^21^{k-4})$ is in the expansion of $F_\al$.
But $M(\al_1\jn\al_4)=M(2121^{k-5})$ is not in the expansion of $F_\al+F_\be$.  This final contradiction shows that there is no $\Pi$ of this form with $Q_n(\Pi)$ being symmetric for all $n$.
\eprf

We note that this result is not true when $k=3$.  For example, $\Pi=\{213,231\}$ has $Q_n(\Pi)$ symmetric and Schur nonnegative for all $n$.  We also conjecture, based on computer experiments, that things change when $\#\Pi\ge3$.
\bcon
Given $p\ge3$ there is a $K$ which is a function of $p$ such that if $\#\Pi=p$ and $\Pi\sbe\fS_k$ for $k\ge K$ then $Q_n(\Pi)$ can not be symmetric for all $n$.
\econ

\section{Superstandard hooks}

An SYT $P$ of shape $\la$ is called {\em row superstandard} if it is obtained by filling the first row with the integers in $[1,\la_1]$, then the second row with the entries $[\la_1+1,\la_1+\la_2]$, and so on.  {\em Column superstandard} is defined analogously using the columns.  {\em Superstandard} means either row or column superstandard.  A {\em superstandard hook} is a superstandard tableau of hook shape. For us, these tableaux are interesting because of the following result.

\bth[\cite{hps:paq}]
\label{ss}
For any SYT $P$ we have $\fS_n(K(P))$ is a union of Knuth classes for all $n$ if and only if $P$ is a superstandard hook.
\hqed
\eth

We now answer Questions~5.13 from~\cite{hps:paq}.   Specifically, we know from the previous result and Corollary~\ref{knuth} (a) that for a superstandard hook $P$ the generating function $Q_n(K(P))$ is Schur nonnegative.  So what do the coefficients in its Schur expansion count?  By Proposition~\ref{rc} it suffices to consider the row superstandard case.
Given positive integers $r,s$ and some SYT $P$ then an {\em $(r,s)$-ascending sequence} in $P$ is a sequence of its elements
\beq
\label{rsa}
p_{1,r}=p_{i_1,j_1}<p_{i_2,j_2}<\dots <p_{i_s,j_s}
\eeq
such that $1=i_1<i_2<\dots<i_s$.  We let
$$
\Av_n(r,s)=\{ P\in\SYT(n) \mid \text{$P$ does not contain an $(r,s)$-ascending sequence}\}.
$$
\bth\label{thm:hook expansion}
Let $R$ be the row superstandard tableau of shape $(r,1^{s-1})$.  Then
$$
Q_n(K(R))=\sum_\la c_\la s_\la
$$
where 
$$
c_\la=\text{the number of $P\in\Av_n(r,s)$ of shape $\la$}.
$$
\eth
\bprf
By Corollary~\ref{knuth} (a) and Theorem~\ref{ss}, it suffices to show that $P\in\SYT(n)$ contains an $(r,s)$-ascending sequence if and only if some permutation with insertion tableau $P$ contains an element of $K(R)$ as a pattern.  Note that
$$
K(R) = [(1,2,\dots,r-1)\shuffle(r+s-1,r+s-2,\dots,r+1)] \cdot r
$$
where the multiplication sign denotes concatenation.

First consider the forward direction and suppose $P$ contains an $(r,s)$-ascending sequence as in~\ree{rsa}.  Consider the row word $\rho$  of $P$.  Because of the restriction on the first coordinates of the subscripts we see that 
$p_{i_s,j_s}>\dots>p_{i_1,j_1}=p_{1,r}$
is a subsequence of $\rho$. Furthermore, the elements $p_{1,1}<p_{1,2}<\dots<p_{1,r-1}$ come in that order before $p_{1,r}$ in 
$\rho$.  The union of these two subsequences standardizes to an element of $K(R)$ which is what we wished to prove.

For the converse, consider the column word $\ka$ of $P$.  So $\ka=C_1 C_2\dots C_t$ where $C_j$ is the $j$th column of $P$ read in decreasing order.  Let $\pi$ be a copy of some element of $K(R)$ in $\ka$.  So $\pi$ contains a decreasing subsequence of length $s$, say $p_{i_1,j_1}>\dots>p_{i_s,j_s}$.  We claim that $i_k>i_{k+1}$ for all $k$.  For assume to the contrary that $i_k\le i_{k+1}$.  But also $j_k\le j_{k+1}$ by the column ordering in $P$. This  forces $p_{i_k,j_k}<p_{i_{k+1},j_{k+1}}$ which is a contradiction.
Furthermore, $p_{i_s,j_s}$  must be the end of an increasing subsequence of $\pi$ of length $r$.  Since $\ka$ lists columns in decreasing order, this forces $j_s\ge r$.  Since $p_{1,r}$ is the minimum element in the columns weakly right of column $r$ we have the sequence
$$
p_{1,r}<p_{i_{s-1},j_{s-1}}<\dots<p_{i_1,j_1}
$$
which is the desired $(r,s)$-ascending sequence in $P$.
\eprf

\section{Partial shuffles}
\label{ps}

The goal of this section is to prove Conjecture~4.2 in \cite{hps:paq}. We first establish some definitions.   For any $a\leq n $ define the corresponding \emph{partial shuffle} as 
$$(1,2,\ldots,\widehat{a}, \ldots, n) \cshuffle a = [(1,2,\ldots,\widehat{a}, \ldots, n) \shuffle a] -\iota_n.$$  
where $\shuffle$ denotes the standard shuffle and $\ \widehat{}\ $ denotes deletion of the indicated element.  We also define the set of \emph{fattened hooks} to be 
$$H_{n,k}= \{\lambda\vdash n \ |\  (2,k)\notin \lambda\}$$
so that $H_{n,2}$ is the set of ordinary hooks.
Set 
$$\Pi_{k,2} = (1,2,\ldots,k, k+1,\widehat{k+2})\cshuffle (k+2)$$
and 
$$\Pi_{k,1} = (1,2,\ldots, k,\widehat{k+1},k+2)\cshuffle (k+1).$$
We now prove the following theorem where the second equality was first stated in \cite{hps:paq} as Conjecture~4.2.   

\begin{thm}\label{thm:ps} 
Fix $k\geq 1$.  Then
$$Q_n(\Pi_{k,2})=Q_n(\Pi_{k,1}) = \sum_{\lambda\in H_{n,k+1}} f^{\overline\lambda}s_\lambda.$$
where $\overline\lambda$ is $\lambda$ with $\lambda_1$ replaced by $\min\{\lambda_1, k\}$.  
\end{thm}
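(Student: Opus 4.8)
The plan is to treat the two generating functions separately: first establish $Q_n(\Pi_{k,2})=\sum_{\la\in H_{n,k+1}}f^{\ol\la}s_\la$ by a clean Knuth-class argument, and then argue $Q_n(\Pi_{k,1})=Q_n(\Pi_{k,2})$. For $\Pi_{k,2}$, the first step is to recognize it as a single Knuth class. Removing $\iota_{k+2}$ amounts to inserting the maximal letter $k+2$ into the increasing word $1\,2\cdots(k+1)$ in each of the first $k+1$ gaps (all gaps but the last, which produces the identity). RS-inserting any of these $k+1$ words produces the \emph{same} insertion tableau, namely the row superstandard hook $R$ of shape $(k+1,1)$ with first row $1\cdots(k+1)$ and second row $k+2$. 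Since these $k+1$ permutations are distinct and there are exactly $f^{(k+1,1)}=k+1$ standard Young tableaux of shape $(k+1,1)$, they exhaust the recording tableaux, so $\Pi_{k,2}=K(R)$.

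I would then apply Theorem~\ref{thm:hook expansion} with $(r,s)=(k+1,2)$, giving $Q_n(K(R))=\sum_\la c_\la s_\la$ with $c_\la=\#\{P\in\Av_n(k+1,2):\sh P=\la\}$, and evaluate $c_\la$. A tableau $P$ has a $(k+1,2)$-ascending sequence exactly when it has a cell $(1,k+1)$ whose entry is smaller than some entry strictly below the first row. If $\la_1\le k$ there is no such cell, so $P$ automatically lies in $\Av_n(k+1,2)$ and $\ol\la=\la$; if $\la_2\ge k+1$ then $p_{2,k+1}>p_{1,k+1}$ always, forcing $c_\la=0$, which matches the restriction $\la\in H_{n,k+1}$. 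In the remaining case $\la_1\ge k+1$, $\la_2\le k$, avoidance forces every entry below row $1$ to be smaller than $p_{1,k+1}$, so the cells in columns $k+1,\dots,\la_1$ of the first row must hold the largest $\la_1-k$ values in increasing order; deleting this horizontal strip is a shape-preserving bijection onto $\SYT(\ol\la)$, whence $c_\la=f^{\ol\la}$. This settles the first half.

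For $\Pi_{k,1}$ the situation is genuinely harder: $\fS_n(\Pi_{k,1})$ is \emph{not} a union of Knuth classes (already for $k=1$, $n=3$ the shape-$(2,1)$ avoiders are $\{231,312\}$, which is not a full Knuth class), so Theorem~\ref{ss} does not apply and one cannot read avoidance off the insertion tableau alone. Instead I would exploit Theorem~\ref{RS}(a), which gives $F_{\Des\pi}=F_{\Des Q(\pi)}$ for every $\pi$. Grouping the avoiders by their recording tableau and comparing with Theorem~\ref{ges} reduces the claim to the following purely enumerative statement: for every $Q\in\SYT(\la)$, the number of $\pi\in\fS_n(\Pi_{k,1})$ with $Q(\pi)=Q$ equals $f^{\ol\la}$ when $\la\in H_{n,k+1}$ and $0$ otherwise, \emph{independent of the choice of $Q$}. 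Since $\Pi_{k,2}$-avoidance is a condition on $P(\pi)$ alone, the same reduction for $\Pi_{k,2}$ gives this count automatically, so establishing it for $\Pi_{k,1}$ yields $Q_n(\Pi_{k,1})=Q_n(\Pi_{k,2})$.

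The heart of the matter, and the step I expect to be the main obstacle, is therefore this $Q$-fixed count. Concretely I would fix the recording tableau $Q$, let the insertion tableau $P$ range over $\SYT(\la)$, and characterize which pairs $(P,Q)$ give $\Pi_{k,1}$-avoiding permutations, aiming to produce, for each fixed $Q$, an explicit bijection between the admissible $P$ and $\SYT(\ol\la)$ that mirrors the truncation bijection used for $\Pi_{k,2}$. The difficulty is precisely that containment of a pattern from $\Pi_{k,1}$ is not Knuth-invariant, so the admissible $P$ depend on $Q$; the real work is to show that, despite this dependence, the resulting count is the single $Q$-independent value $f^{\ol\la}$. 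I would look for this via a careful RS/jeu-de-taquin description of $\Pi_{k,1}$-containment, or a direct descent-preserving bijection $\fS_n(\Pi_{k,1})\to\fS_n(\Pi_{k,2})$ that preserves the recording tableau.
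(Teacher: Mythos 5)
Your treatment of $\Pi_{k,2}$ is correct and is essentially the paper's own: you identify $\Pi_{k,2}=K(R)$ for the row superstandard tableau $R$ of shape $(k+1,1)$, invoke Theorem~\ref{thm:hook expansion} with $(r,s)=(k+1,2)$, and evaluate $c_\la=f^{\ol\la}$ by the truncation bijection (your case analysis on $\la_1\le k$, $\la_2\ge k+1$, and the remaining case matches the paper's argument). The genuine gap is the other half, the equality $Q_n(\Pi_{k,1})=Q_n(\Pi_{k,2})$, which you explicitly leave open. Your preliminary observations there are sound: $\fS_n(\Pi_{k,1})$ is indeed not a union of Knuth classes (your $k=1$, $n=3$ example checks out), and your reduction to showing that the number of $\pi\in\fS_n(\Pi_{k,1})$ with $Q(\pi)=Q$ is a $Q$-independent quantity $f^{\ol\la}$ would suffice. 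But naming this as "the main obstacle" and listing candidate strategies is not a proof; the entire content of the theorem beyond Theorem~\ref{thm:hook expansion} lives exactly in this step, and nothing in your proposal resolves it.

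For comparison, the paper closes this gap by structural characterizations of both avoidance classes, routed through a common intermediate set. For $\sigma\in\fS_n$ one considers the $k$-endpoints (last letters of occurrences of $\iota_k$ in $\sigma$), the left-to-right minima $r(\sigma)=r_1\cdots r_s$ of the subsequence they form, and the complementary subsequence $t(\sigma)$. Lemma~\ref{lem:char k} shows $\sigma\in\fS_n(\Pi_{k,2})$ if and only if $t(\sigma)$ is increasing with value set an interval $[a,n]$, while Lemma~\ref{lem:char k-1} shows $\sigma\in\fS_n(\Pi_{k,1})$ if and only if each block $t_i(\sigma)$ of $t(\sigma)$ lying between $r_i$ and $r_{i+1}$ is increasing with value set an interval $(r_i,a_i]$. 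In both cases, replacing the entries of $t(\sigma)$ by the symbol $\infty$ is a descent-preserving bijection onto the same set $\fR_n$, because the characterization lets one reconstruct the erased values uniquely; composing the two bijections gives a descent-preserving bijection $\fS_n(\Pi_{k,1})\to\fS_n(\Pi_{k,2})$, and equality of the generating functions is immediate. Note also that your reduction asks for more than is needed: since $Q_n(\Pi)$ only sees descent sets, any descent-preserving bijection between the two avoidance sets suffices, whereas a $Q(\pi)$-preserving bijection (equivalently, the $Q$-independence of your fixed-$Q$ count) is a strictly stronger property that you would then have to establish on top of the bijection itself.
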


The idea behind our proof is to first establish a descent-preserving bijection between the sets $\fS_n(\Pi_{k,1})$ and 
$\fS_n(\Pi_{k,2})$.  This then reduces the problem to proving that $Q_n(\Pi_{k,2})$ has the desired Schur function decomposition.   By way of Theorem~\ref{thm:hook expansion} this becomes a straightforward calculation.

To construct our bijection we begin with some definitions.  We represent permutations $\si$ via their {\em permutation diagrams} which consist of the points $(i,\si_i)$ in the first quadrant of the plane.
For any $\sigma\in \fS_n$ we say $\sigma_j$ is a \emph{ $k$-endpoint} provided that there exists an occurrence $\sigma_{i_1}\ldots\sigma_{i_k}$ of $\iota_{k}$  in $\sigma$ where $\sigma_{i_k} = \sigma_j$.    Now let $\sigma'$ be the subsequence of $\sigma$ consisting of all the $k$-endpoints. Observe that this  subsequence $\sigma'$ has the nice property that if $\sigma_i$ is in $\sigma'$ then all elements of $\sigma$ that are northeast of $\sigma_i$ are also in $\sigma'$.    Define $r(\sigma)$ to be the left-to-right minima of $\sigma'$ and define $t(\sigma)$ to be the subsequence of $\sigma'$ whose elements are not in $r(\sigma)$. Equivalently $t(\sigma)$ is  the subsequence of all elements in $\sigma$ that are strictly northeast of some element in $r(\sigma)$.  
As such the elements in $t(\sigma)$ are precisely those that are $(k+1)$-endpoints in $\sigma$.  

For a permutation $\sigma$ and subsequence $\tau$ of $\sigma$ we define $\xi=f(\sigma, \tau)$ to be the sequence obtained by replacing the elements in $\sigma$ that occur in $\tau$ with the symbol $\infty$.  We call $i$ a \emph{descent} of $\xi$ provided that $\infty\geq \xi_i> \xi_{i+1}$ and, as usual, denote the set of descents of $\xi$ by $\Des(\xi)$.   Define 
$$\fR_n = \{f(\sigma, t(\sigma)) \ |\ \sigma\in \fS_n\}.$$
For example let $\sigma = 7415326$ and $\tau = 745$.  Then $\xi = \infty\infty 1\infty 326$ with $\Des(\xi) = \{2,4,5\}$.  

In the next two lemmas we show that the permutations in $\fS_n(\Pi_{k,2})$ and those in $\fS_n(\Pi_{k,1})$ have similar decompositions.  In particular we show in Lemma~\ref{lem:char k} that any $\sigma\in \fS(\Pi_{k,2})$ decomposes similarly to the following diagram:

\begin{center}
\begin{tikzpicture}[scale=.7,baseline=(current bounding box.center)]
\draw[fill = lightgray](0,0) -- (0,8) -- (2,8) --(2,6) -- (4,6) -- (4,4)--(6,4)--(6,2)--(9,2)--(9,0)--(0,0);
\draw[line width = 2pt](2.2, 8) -- (9, 9);

\filldraw (2,6) circle (3pt) node[align=left,   below] {$r_1$};
\filldraw (4,4) circle (3pt) node[align=left,   below] {$r_2$};
\filldraw (6,2) circle (3pt) node[align=left,   below] {$r_3$};
\filldraw (9,9) circle (3pt) node[align=left,   below] {$n$};

\draw (0,0) -- (0,9.2) -- (9.2,9.2) -- (9.2,0) -- (0,0);

\end{tikzpicture}.
\end{center}
Here $r(\sigma) = r_1r_2r_3$, the region in gray represents some $\iota_k$-avoiding permutation and white represents regions containing no elements of $\sigma$.  The elements in $t(\sigma)$ form an increasing sequence whose set of values is an interval containing $n$.  This is depicted by the line segment ending at $n$.   Likewise we show in Lemma~\ref{lem:char k-1} that any $\sigma\in \fS_n(\Pi_{k,1})$ decompose similarly to the following diagram:

\begin{center}
\begin{tikzpicture}[scale=.8,baseline=(current bounding box.center)]

\fill[lightgray] (0,0) rectangle (8,2);
\fill[lightgray] (0,3) rectangle (6,4);
\fill[lightgray] (0,5) rectangle (4,6);
\fill[lightgray] (0,7) rectangle (2,8);

\draw(2,8) --(2,6) -- (4,6) -- (4,4)--(6,4)--(6,2)--(8,2);

\draw[line width = 2pt](2, 6) -- (3.9, 7);
\draw[line width = 2pt](4, 4) -- (5.9, 5);
\draw[line width = 2pt](6, 2) -- (7.9, 3);

\filldraw (2,6) circle (3pt) node[align=left,   below] {$r_1$};
\filldraw (4,4) circle (3pt) node[align=left,   below] {$r_2$};
\filldraw (6,2) circle (3pt) node[align=left,   below] {$r_3$};

\draw (0,0) -- (0,8) -- (8,8) -- (8,0) -- (0,0);

\end{tikzpicture}
\end{center}
Again, $r(\sigma) = r_1 r_2 r_3$, the gray regions represent some $\iota_k$-avoiding permutations, and white represents regions containing no elements from $\sigma$.  The diagonal lines associated to each $r_i$ represent the elements in $t(\sigma)$ between $r_i$ and $r_{i+1}$ which are increasing and whose values form an interval of the form $(r_i, a_i]$ for some $a_i$.

\begin{lem}\label{lem:char k}
	We have $\sigma\in \fS_n(\Pi_{k,2})$ if and only if $t(\sigma)$  is an increasing sequence whose values form an interval of the form $[a,n]$ for some value $a$.  
Moreover, the function
	$$\varphi: \fS_n(\Pi_{2,k}) \to \fR_n$$
	given by $\sigma \mapsto f(\sigma, t(\sigma))$ is a bijection satisfying $\Des(\sigma) = \Des(\varphi(\sigma))$.
\end{lem}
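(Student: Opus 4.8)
The plan is to establish the structural characterization of $\fS_n(\Pi_{k,2})$ first, and then read the bijectivity and the descent identity off of it.

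\textbf{Characterization.} The engine I would set up is the reformulation: $\sigma$ contains some pattern of $\Pi_{k,2}$ if and only if $\sigma$ has a $(k+1)$-endpoint that is not a left-to-right maximum. A pattern in $\Pi_{k,2}$ is exactly $\iota_{k+1}$ with its maximum inserted in a non-final position, so an occurrence is precisely an increasing subsequence of length $k+1$ ending at some entry $p$, together with an entry $b>p$ lying to the left of $p$. For the forward direction, the last entry $p$ of such an occurrence is a $(k+1)$-endpoint and $b$ witnesses that $p$ is not a left-to-right maximum; conversely, given a $(k+1)$-endpoint $p$ with a larger entry $b$ to its left, the length-$(k+1)$ increasing run ending at $p$ together with $b$ standardizes to a pattern of $\Pi_{k,2}$. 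Since (as recalled before the lemma) $t(\sigma)$ is exactly the set of $(k+1)$-endpoints, avoidance is equivalent to every element of $t(\sigma)$ being a left-to-right maximum, which forces $t(\sigma)$ to be increasing. To upgrade this to the interval statement I would show the value set of $t(\sigma)$ is upward closed under avoidance: if $p\in t(\sigma)$ and $v>p$, then $v$ must lie to the right of $p$ (else $v$ contradicts $p$ being a left-to-right maximum), and appending $v$ to the increasing run ending at $p$ exhibits $v$ as a $(k+1)$-endpoint, so $v\in t(\sigma)$; hence the values of $t(\sigma)$ form an interval $[a,n]$. The converse runs the same observation backwards: if $t(\sigma)$ is increasing with values $[a,n]$, any entry larger than a $(k+1)$-endpoint already has value in $[a,n]$, hence lies in $t(\sigma)$ and so to the right, so no $(k+1)$-endpoint has a larger entry to its left and $\sigma$ avoids $\Pi_{k,2}$.

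\textbf{The bijection.} For an avoider $\sigma$ the characterization pins down $\xi=f(\sigma,t(\sigma))$ completely: its $\infty$'s occupy exactly the positions carrying the top values $a,\dots,n$, recorded left to right in increasing order, while the finite entries are precisely $1,\dots,a-1$ in their original positions. This description is reversible, so I would define the inverse $\psi$ by replacing the $\infty$'s, scanned left to right, with $a,a+1,\dots,n$, where $a-1$ is the number of finite entries. Then $\psi\circ\varphi=\mathrm{id}$ on $\fS_n(\Pi_{k,2})$ by construction, and $\varphi\circ\psi=\mathrm{id}$ on $\fR_n$ because in $\psi(\xi)$ the filled-in top values, being larger than every finite entry and increasing from left to right, are exactly its $(k+1)$-endpoints; hence $t(\psi(\xi))$ is again this top interval and $\psi(\xi)$ is an avoider returning $\xi$ under $\varphi$.

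\textbf{Descent preservation.} This is a short case analysis at a position $i$ according to whether $\sigma_i$ and $\sigma_{i+1}$ belong to $t(\sigma)$, using that every entry of $t(\sigma)$ exceeds every entry outside it (values $\ge a$ versus $<a$) and that $t(\sigma)$ is increasing. If neither lies in $t(\sigma)$ the comparison of $\xi_i,\xi_{i+1}$ is unchanged; if only $\sigma_i\in t(\sigma)$ then $\xi_i=\infty>\xi_{i+1}$ and also $\sigma_i\ge a>\sigma_{i+1}$, so both are descents; if only $\sigma_{i+1}\in t(\sigma)$ then $\xi_i$ finite with $\xi_{i+1}=\infty$ gives a non-descent, matching $\sigma_i<a\le\sigma_{i+1}$; and if both lie in $t(\sigma)$ then $\xi_i=\xi_{i+1}=\infty$ is a non-descent, matching $\sigma_i<\sigma_{i+1}$. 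In all cases the condition $\infty\ge\xi_i>\xi_{i+1}$ agrees with $\sigma_i>\sigma_{i+1}$, so $\Des(\sigma)=\Des(\varphi(\sigma))$.

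\textbf{Main obstacle.} The real content is the characterization, and within it the converse of the reformulation, namely manufacturing an honest occurrence of a pattern in $\Pi_{k,2}$ out of a single $(k+1)$-endpoint with a larger entry to its left, together with the upward-closure argument that promotes ``every $(k+1)$-endpoint is a left-to-right maximum'' to the full interval condition $[a,n]$. Once that is in hand, the inverse map $\psi$ and the descent computation are routine bookkeeping.
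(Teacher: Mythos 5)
Your characterization of $\fS_n(\Pi_{k,2})$ and your descent computation are correct and follow the paper's own route exactly: the same reformulation (an occurrence of a pattern of $\Pi_{k,2}$ is precisely a $(k+1)$-endpoint with a larger entry to its left), the same two directions for the structural statement, and the same four-case analysis showing $\Des(\sigma)=\Des(\varphi(\sigma))$; indeed you are more explicit than the paper about upgrading ``every $(k+1)$-endpoint is a left-to-right maximum'' to the interval condition. The genuine gap is in bijectivity, specifically in the claim that $\varphi\circ\psi=\mathrm{id}$ on all of $\fR_n$.

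The set $\fR_n$ is defined as $\{f(\sigma_0,t(\sigma_0)) \mid \sigma_0\in\fS_n\}$ with $\sigma_0$ ranging over \emph{all} permutations, not just avoiders, and your argument never engages with that definition. You describe the image of $\varphi$ (finite entries $1,\dots,a-1$ in place, $\infty$'s carrying the top values), define $\psi$ on sequences of that form, and then justify $\varphi(\psi(\xi))=\xi$ by saying the filled-in top values, ``being larger than every finite entry and increasing from left to right, are exactly its $(k+1)$-endpoints.'' That inference is invalid: largeness plus increasingness does not create $(k+1)$-endpoints, since the leftmost filled-in value is one only if $k$ finite entries increase to its left. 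For instance, with $k=2$, filling $\xi=1\,\infty\,2$ gives $132$, whose set of $3$-endpoints is empty, so $t(\psi(\xi))$ is not the $\infty$ position; such $\xi$ do not lie in $\fR_n$, but your reasoning never uses membership in $\fR_n$, which is exactly the problem. Worse, under the paper's literal (non-standardizing) definition of $f$, a general element of $\fR_n$ need not have finite part $1,\dots,a-1$ at all: for $k=1$ and $\sigma_0=3142$ one gets $3\,1\,\infty\,\infty\in\fR_4$, on which your $\psi$ does not even return a permutation. The repair is to read $f$ as standardizing its finite part (surely the intent) and then to prove, for $\xi=f(\sigma_0,t(\sigma_0))$, that the $\infty$ positions become exactly the $(k+1)$-endpoints after filling. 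This uses the $r(\sigma_0)$ structure set up before the lemma: each element of $t(\sigma_0)$ lies strictly northeast of some $r_i\in r(\sigma_0)$; the length-$k$ increasing run ending at $r_i$ consists of non-$(k+1)$-endpoints (otherwise $r_i$ would itself be a $(k+1)$-endpoint), hence of finite entries; that run persists, in the same relative order, among the finite entries of $\psi(\xi)$ and is completed by the larger filled-in value to its northeast; conversely no finite entry of $\psi(\xi)$ can become a $(k+1)$-endpoint, since a witnessing increasing run would consist entirely of finite entries and would pull back to $\sigma_0$. To be fair, the paper's published proof is equally terse at this exact point---it simply asserts that replacing the infinities constructs the inverse---so your proposal is no less complete than the paper's; but the explicit reason you supply for the key step is wrong, and that step is where the real content of the bijection lies.
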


\begin{proof}
	To prove the forward direction of the first statement, it suffices to show that for any  $b\in t(\sigma)$, all the elements $c>b$ occur  to the right of $b$.  But there is an increasing subsequence $\io$ of $\sigma$ of length $k+1$ ending at $b$.  So if there is a $c>b$ to the left of $b$ then $\sigma$ would contain the subsequence $\io\cup c$ which is a  copy of an element of $\Pi_{k,2}$  which cannot happen.  For the reverse direction let $\sigma$ satisfy the increasing condition.  Suppose, towards a contradiction, that $\sigma$ contains a copy $\ka$ of some element of $\Pi_{k,2}$ and let $b$ and $c$ be the elements of $\ka$ representing $k+1$ and $k+2$, respectively.  Then $b$ is a $(k+1)$-endpoint and so is in $t(\sigma)$.  But now $c>b$ occurs to the left of $b$, contradicting the increasing hypothesis.

	The fact that $\varphi$ is a bijection follows because we can construct its inverse using the description of the elements of 
$\fS_n(\Pi_{k,2})$ just proved.  So given $\tau\in \fR_n$ we merely replace its infinities with the elements of $[a,n]$ in increasing order where $n-a+1$ is the number of infinities.  The description of  $\fS_n(\Pi_{k,2})$ also shows that $i\in\Des(\sigma)$ for  $\sigma\in \fS_n(\Pi_{k,2})$ if and only if either $\sigma_i>\sigma_{i+1}$ and both are not in $t(\sigma)$, or 
$\sigma_i\in t(\sigma)$ and $\sigma_{i+1}\not\in t(\sigma)$.  It easy to see that similar conditions characterize 
$\Des(\varphi(\sigma))$ proving the final statement of the theorem.   
\end{proof}
   
  For our next lemma we need to refine the subsequence $t(\sigma)$.  In particular, for any  $\sigma\in \fS_n$ denote 
$r(\sigma) = r_1r_2\ldots r_s$.  For each $i< s$ let $t_i(\sigma)$ be the subsequence of elements in $t(\sigma)$ that lie in positions between $r_i$ and $r_{i+1}$.  And define $t_s(\sigma)$ to be the subsequence of $t(\sigma)$ that are to the right of $r_s$.  Consequently
  $$t(\sigma) = t_1(\sigma)\cdot t_2(\sigma)\cdot\ \cdots\ \cdot t_s(\sigma),$$  
  where $\cdot$ is concatenation.  

\begin{lem}\label{lem:char k-1}
	We have $\sigma\in \fS_n(\Pi_{k,1})$ if and only if for each $i\leq s$ the sequence $t_i(\sigma)$ is increasing and its values form an interval 
$(r_i,a_i]$ for some $a_i$.  Moreover, the function
	$$\psi: \fS_n(\Pi_{k,1}) \to \fR_n$$
	given by $\sigma \mapsto f(\sigma, t(\sigma))$ is a bijection satisfying $\Des(\sigma) = \Des(\psi(\sigma))$.
\end{lem}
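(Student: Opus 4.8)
The plan is to follow the template of Lemma~\ref{lem:char k} in two stages: first prove the structural characterization of $\fS_n(\Pi_{k,1})$, and then use it to invert $\psi$ and to check that descents are preserved. I would argue the characterization by contraposition, matching each way the block description can fail to one of the two shapes of pattern in $\Pi_{k,1}$: the pattern $(1,2,\dots,k,k+2,k+1)$, in which the value $k+1$ sits after $k+2$, and the patterns in which $k+1$ sits strictly before the entry playing the role of $k$.

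For the forward direction, suppose the block description fails at some block $i$, so that either $t_i(\sigma)$ is not increasing or its value set is not the interval $(r_i,a_i]$, where $a_i$ is the largest value occurring in $t_i(\sigma)$. If $t_i(\sigma)$ has a descent, say $b$ before $b'$ with $b>b'$, I would append $b$ and $b'$ to the length-$k$ increasing run witnessing $r_i$ as a $k$-endpoint (both $b,b'$ are northeast of $r_i$); reading off the values $\dots<r_i<b'<b$ gives a copy of $(1,\dots,k,k+2,k+1)$. If instead some value $u$ with $r_i<u<a_i$ is missing from $t_i(\sigma)$, then the element carrying value $u$ cannot lie in the position window of block $i$, because any entry there that exceeds $r_i$ is automatically a $(k+1)$-endpoint and hence already in $t_i(\sigma)$; so that element lies to the right of $r_{i+1}$ or to the left of $r_i$. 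The first case produces $(1,\dots,k,k+2,k+1)$, and the second, after splicing $u$ into the run for $r_i$ together with the block-$i$ entry of value $a_i$ (which lies to the right of $r_i$, so $u$ is never immediately before it), produces a pattern of the second shape. Either way $\sigma$ contains an element of $\Pi_{k,1}$.

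For the reverse direction I would assume the block description and take a hypothetical copy of some $\pi\in\Pi_{k,1}$; let $M$ play the role of $k+2$ with accompanying increasing run $v_1<\dots<v_k<M$. Then $M$ is a $(k+1)$-endpoint lying in some block $i$, so $M\le a_i$, and one checks $r_i<v_k$. This forces the role-$(k+1)$ element $b$ (when $b$ lies after $M$) or the run top $v_k$ (when $b$ lies before it) to have value in $(r_i,a_i]$; by the interval hypothesis that value is realized by an element of $t_i(\sigma)$, and its position then violates the monotonicity of $t_i(\sigma)$, the desired contradiction.

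For the second statement, descent preservation is identical to Lemma~\ref{lem:char k}: extending the run witnessing a $(k+1)$-endpoint shows that any entry larger than it and adjacent to it is again a $(k+1)$-endpoint, so a $t(\sigma)$-entry is never immediately preceded or followed by a larger non-$t(\sigma)$-entry, whence the descents of $\sigma$ coincide with those of $\psi(\sigma)$ under the convention $\infty\ge\xi_i>\xi_{i+1}$. Injectivity is immediate, since the non-$\infty$ data of $\psi(\sigma)$ together with the block description pin down the values of $t(\sigma)$. The hard part will be surjectivity onto $\fR_n$: I would build $\psi^{-1}$ by scanning $\xi\in\fR_n$, reading off the left-to-right minima that delimit the blocks, and filling the $\infty$'s of each block, in increasing order, with the unique contiguous interval of missing values lying just above the corresponding minimum. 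The main obstacle is precisely this inverse — verifying that the filling is well defined from $\xi$ alone, that it lands in $\fS_n(\Pi_{k,1})$ with $t$-set equal to the $\infty$-positions, and that it inverts $\psi$ — because this is what identifies the image of $\psi$ with the common target $\fR_n$ of Lemma~\ref{lem:char k} and thereby yields the descent-preserving bijection $\varphi^{-1}\circ\psi$ underlying Theorem~\ref{thm:ps}.
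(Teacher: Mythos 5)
Your characterization half is sound, and it is essentially the paper's own argument rearranged: the paper derives monotonicity and the interval property simultaneously from the single claim that for $b\in t_i(\sigma)$ every value in $(r_i,b)$ must occur positionally between $r_i$ and $b$, and in the converse it anchors the contradiction on the letter playing the role of $k$ rather than on $M$ playing $k+2$ as you do; these differences are cosmetic. One small patch is needed in your converse: when the role-$(k+1)$ letter $b$ lies before $M$, concluding $v_k\in t_i(\sigma)$ is not yet a contradiction; you must also note that $b$'s value lies in $(v_k,M)\subseteq(r_i,a_i]$, hence $b\in t_i(\sigma)$ as well, and then $b$ sitting before $v_k$ with $b>v_k$ violates monotonicity (or, if $b$ precedes $r_i$, membership in block $i$). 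Descent preservation and injectivity are fine.

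The step you left open --- surjectivity of $\psi$ onto $\fR_n$ --- is a genuine gap, and your instinct that it is the crux is right in the strongest sense: with the paper's definitions taken literally it is \emph{false}, so no argument can close it. Take $k=1$, $n=3$, so $\Pi_{1,1}=\{213,132\}$; every letter is a $1$-endpoint, so $r(\sigma)$ is the sequence of left-to-right minima and $t(\sigma)$ the rest. Then $\fR_3=\{1\infty\infty,\,21\infty,\,2\infty 1,\,31\infty,\,321\}$ has five elements while $\fS_3(\Pi_{1,1})=\{123,231,312,321\}$ has four, so no bijection exists; concretely, the unique $f$-preimage of $21\infty$ is $213$, a forbidden pattern (and this is not a $k=1$ degeneracy: for $k=2$, $n=4$ the unique preimage of $132\infty$ is $1324\in\Pi_{2,1}$). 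The paper's own one-line inverse --- fill block $i$'s infinities with $(r_i,a_i]$ --- breaks on the same examples, because those values may already occur among the visible letters (for $21\infty$ the slot would need the value $2$). The same defect afflicts Lemma~\ref{lem:char k}, and the two images even differ ($21\infty\in\im\varphi\setminus\im\psi$ while $31\infty\in\im\psi\setminus\im\varphi$), so the composition invoked in the proof of Theorem~\ref{thm:ps} does not literally exist either, although the theorem itself appears to be true. The repair is to let $f$ standardize the visible letters, i.e.\ record only the $\infty$-positions together with the relative order of the non-$\infty$ entries; with that definition of $\fR_n$ the value collisions disappear, and your scanning construction of $\psi^{-1}$ --- read off the blocks from the visible $k$-endpoints, then insert just above each $r_i$ a contiguous run of values of the prescribed length --- becomes well defined and proves the corrected lemma, for $\varphi$ and $\psi$ alike, restoring the descent-preserving bijection needed for Theorem~\ref{thm:ps}.
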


\begin{proof}
	To prove the forward direction of the first statement, it suffices to show that for any $b\in t_i(\sigma)$ all the elements $c\in (r_i,b)$ occur in positions between $r_i$ and $b$.  But there is an increasing subsequence $\io$ of $\sigma$ of length $k+1$ that ends with the pair $r_i b$.   So if there is such a $c$ either before $r_i$ or after $b$, then $\sigma$ would contain the subsequence $\io\cup c$ which is a  copy of an element of $\Pi_{k,1}$.  The reverse direction is also by contradiction.  Suppose that $\sigma$ contains a copy $\ka$ of some element of $\Pi_{k,1}$ and let $b$, $c$, and $d$ be the elements of $\ka$ representing $k+2$, $k+1$ and $k$, respectively.  Then $d$ is a $k$-endpoint and so must be in $[r_i,a_i]$ for some $i$.  This forces $b\in(r_i,a_i]$ since $b$ is bigger than and to the right of $d$.  But then $c\in(r_i,a_i)$ is in a position either before $r_i$ or after $a_i$ which contradicts the increasing condition in this lemma.

As before, we  show that $\psi$ is bijective by constructing its inverse.  Given $\tau\in \fR_n$ we replace the infinities between $r_i$ and $r_{i+1}$ with the elements of $(r_i,a_i]$ in increasing order where $a_i$ is chosen to give the appropriate number of values for the infinity slots.  Lastly the proof that $\psi$ preserves descents is similar to the proof given for the previous lemma and so we omit the details.  
\end{proof}

\begin{proof}[Proof of Theorem~\ref{thm:ps}]
	Combining Lemmas~\ref{lem:char k} and \ref{lem:char k-1} we obtain a descent-preserving bijection between the sets $Q_n(\Pi_{k,2})$ and $Q_n(\Pi_{k,1})$.  As such it now suffices to prove that $Q_n(\Pi_{k,2})$ has the desired Schur decomposition.
	
	Observe that $\Pi_{k,2}=K(R)$ where $R$ is the row superstandard tableau of shape $(k+1, 1)$.  By Theorem~\ref{thm:hook expansion} we then see that
	$$Q_n(\Pi_{k,2}) = \sum_{\lambda} c_\lambda s_\lambda$$
	where $c_\lambda$ is the number of $P\in \Av_n(k+1,2)$ of shape $\lambda$.  It follows from the definitions that  we have  $P\in \Av_n(k+1,2)$ if and only if $\{p_{1,k+1},p_{1,k+2},\ldots, p_{1,n}\} = [p_{1,k+1}, n]$.
It follows that $(2,k+1)\not\in\la$ since we would have to have $p_{2,k+2}>p_{1,k+2}$ and all the elements greater than $p_{1,k+2}$ are to its right.  Thus $\la\in H_{n,k+1}$.  Furthermore, there is a bijection between such tableaux and those of shape $\lab$ gotten by removing the elements $[p_{1,k+1}, n]$.  Thus $c_\la = f^{\lab}$  finishing the proof.
\end{proof}

We conclude this section with a remark.  In our investigation of Conjecture~4.2 we observed that Theorem~\ref{thm:ps} can be generalized. In particular, pick $a,k$ with $k\geq 1$ and $a\leq k+2$ and define 
$$\Pi(a) = (1,2,\ldots, \widehat{a}, \ldots, k,k+1,k+2)\cshuffle a.$$
Then we have $$Q_n(\Pi(a)) = \sum_{\lambda\in H_{n,k+1}} f^{\overline\lambda}s_\lambda.$$ 
To prove this one constructs a descent-preserving bijection, similar to the one given by the above lemmas between the sets 
$\fS_n(\Pi(a))$ and $\fS_n(\Pi(a-1))$.  We choose to omit this details because they are messy and consequently not state this result as a theorem.  Instead we opt to give an example which we hope motivates this more general bijection for the reader.  To this end let $k = 5$ and $a = 5$ so that $\Pi(a) = 123467 \cshuffle 5$.   Now let $\sigma$ be the following $\Pi(5)$-avoiding permutation
\begin{center}
	\begin{tikzpicture}[scale=.4]
		\draw[step=1cm,gray,very thin] (0,0) grid (18,18);
		\foreach \x/\y in {1/2,2/11,3/6,4/4,5/10,6/3,7/13,8/5,9/14,10/1,11/15,12/16,13/7,14/8,15/9,16/18,17/17,18/12}
		{
			\node at (\x-0.5,\y-0.5) {$\bullet$};
		}	
		
		\node at (5-0.5,10-0.5) {\Large $\bullet$};
		\node at (8-0.5,5-0.5) {\Large $\bullet$};
		
		\node at (15-0.5,9-0.5) {\Large $\bullet$};
		\node at (12-0.5,16-0.5) {\Large $\bullet$};
		
		\draw[thick] (4.5,9.5) -- (4.5,15.5)--(11.5, 15.5)--(11.5,8.5)--(14.5,8.5)--(14.5, 4.5)--(7.5,4.5)--(7.5, 9.5)--(4.5,9.5);

	\end{tikzpicture}
\end{center}
where  $\sigma_5=10$ and $\sigma_8= 5$ are the left-to-right minima among all $(a-2)$-endpoints. (This corresponds to our decomposition of $\Pi_{k,2}$-avoiding permutations as in that case $a = k+2$ and we consider $k$-endpoints.)  Similarly $\sigma_{15} = 9$ and $\sigma_{12}= 16$ are the right-to-left maxima among all points that represent a 1 in an occurrence of $\iota_{k+2-a}$. (In the $\Pi_{k,2}$ case $a = k+2$ so these additional points are ignored.)   These four points are depicted with a larger bullet and the region between them is outlined.  

To motivate how to transform the above picture into a $\Pi(4)$-avoiding permutation recall that in the bijection above we essentially took some $\sigma\in\fS(\Pi_{k,2})$ and let the points in $t(\sigma)$ fall, as if by gravity, ``monotonically" as far as possible while staying above the $r_i$'s.  Treating the points in the outlined region similarly we obtain  
\begin{center}
	\begin{tikzpicture}[scale=.4]
		\draw[step=1cm,gray,very thin] (0,0) grid (18,18);
		\foreach \x/\y in {1/2,2/14,3/10,4/4,5/12,6/3,7/13,8/5,9/6,10/1,11/7,12/16,13/8,14/9,15/11,16/18,17/17,18/15}
		{
			\node at (\x-0.5,\y-0.5) {$\bullet$};
		}	
		
		\node at (5-0.5,12-0.5) {\Large $\bullet$};
		\node at (8-0.5,5-0.5) {\Large $\bullet$};
		
		\node at (15-0.5,11-0.5) {\Large $\bullet$};
		\node at (12-0.5,16-0.5) {\Large $\bullet$};
		
		\draw[thick] (4.5,11.5) -- (4.5,15.5)--(11.5, 15.5)--(11.5,10.5)--(14.5,10.5)--(14.5, 4.5)--(7.5,4.5)--(7.5, 11.5)--(4.5,11.5);

	\end{tikzpicture}
\end{center}
One can check that this permutation is in $\Pi(4)=123567 \cshuffle 4$ as needed.

\section{Pattern-Knuth closed classes}
\label{pkc}

We say $\Pi\sbe \fS$ is \emph{pattern-Knuth closed} if $\fS_k(\Pi)$ is a union of Knuth classes for all $k$.  
Equivalently, $\fSb_k(\Pi)$ is a union of Knuth classes for all $k$.
Note that if $\Pi$ is pattern-Knuth closed then $Q_k(\Pi)$ is Schur nonnegative for all $k$.
This concept was introduced  and studied by Hamaker, Pawloski, and Sagan~\cite{hps:paq}. In this section we continue the investigation of this topic and in doing so answer one of their questions. 

If $\Pi\sbe\fS_k$ is pattern-Knuth closed then, in particular, $\Pi$ must be a union of Knuth classes.  In~\cite{hps:paq} the authors characterized which  $\Pi=K(S)$ for a single SYT $S$ are pattern-Knuth closed.  It turns out that this happens precisely when $S$ is a superstandard hook.  In Theorem~\ref{thm:single} below we give an augmented version of their result and give a more conceptual proof. Our techniques are strong enough that in Theorem~\ref{thm:pairs} we resolve the case where $\Pi$ is a union of two Knuth classes which was left as Question~5.14 in~\cite{hps:paq}.  We also discuss why these results do not seem to generalize to unions of more than two Knuth classes.

We first recall another useful characterization of a Knuth class.  Consider positive integers $a<b<c$.  A {\em Knuth move} in a permutation $\pi$ consists of replacing a factor (adjacent subsequence) of the form $acb$ with one of the form $cab$, or vice-versa.  One is also permitted to exchange factors of the form $bac$ and $bca$.
\bth[\cite{knu:pmg}]
\label{Kequiv}
Two permutations are Knuth equivalent if and only if one can be transformed into the other by a series of Knuth moves.\hqed
\eth

We begin with an elementary property of pattern-Knuth closed sets.

\begin{prop}\label{union}
If $\Pi$ and $\Pi'$ are pattern-Knuth closed, then so is $\Pi \cup \Pi'$.   	
\end{prop}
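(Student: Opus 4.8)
The plan is to reduce the statement to an elementary fact about the partition of $\fS_k$ into Knuth classes, so that essentially no real work is required beyond a set-theoretic observation.

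First I would record the basic identity governing avoidance of a union of pattern sets: for every $k\ge0$,
$$
\fS_k(\Pi\cup\Pi') = \fS_k(\Pi)\cap\fS_k(\Pi').
$$
Indeed, a permutation $\si\in\fS_k$ avoids every pattern in $\Pi\cup\Pi'$ if and only if it avoids every pattern in $\Pi$ and also avoids every pattern in $\Pi'$, which is exactly the condition $\si\in\fS_k(\Pi)\cap\fS_k(\Pi')$. By hypothesis each of $\fS_k(\Pi)$ and $\fS_k(\Pi')$ is a union of Knuth classes, so it remains only to show that the intersection of two unions of Knuth classes is again a union of Knuth classes.

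The key point is that the Knuth classes partition $\fS_k$, so any two of them are either equal or disjoint. Suppose $A$ and $B$ are each a union of Knuth classes and let $K$ be a Knuth class meeting $A\cap B$. Since $K$ meets $A$ and $A$ is a union of (pairwise disjoint) Knuth classes, we must have $K\sbe A$; likewise $K\sbe B$, hence $K\sbe A\cap B$. Thus every Knuth class that meets $A\cap B$ is contained in it, which says precisely that $A\cap B$ is a union of Knuth classes. Applying this with $A=\fS_k(\Pi)$ and $B=\fS_k(\Pi')$ shows that $\fS_k(\Pi\cup\Pi')$ is a union of Knuth classes for all $k$, i.e.\ $\Pi\cup\Pi'$ is pattern-Knuth closed.

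Since the argument is purely formal, there is no genuine obstacle; the only thing to be careful about is stating the avoidance identity in the correct direction, namely that avoiding the \emph{union} of pattern sets corresponds to the \emph{intersection} of the avoidance sets rather than their union. One could equivalently phrase everything in terms of the complementary sets, using $\fSb_k(\Pi\cup\Pi')=\fSb_k(\Pi)\cup\fSb_k(\Pi')$ together with the (even more immediate) fact that a union of unions of Knuth classes is a union of Knuth classes, but the intersection formulation above is the most direct.
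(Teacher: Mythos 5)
Your proof is correct and follows essentially the same route as the paper: both rest on the identity $\fS_k(\Pi\cup\Pi')=\fS_k(\Pi)\cap\fS_k(\Pi')$ and the observation that an intersection of unions of Knuth classes is again a union of Knuth classes. The only difference is that you spell out the partition argument for that last observation, which the paper leaves implicit.
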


\begin{proof}
	Observe that $\fS_n(\Pi\cup \Pi') = \fS_n(\Pi)\cap \fS_n(\Pi')$.  As both $\fS_n(\Pi)$ and $\fS_n(\Pi')$ are unions of Knuth classes, it follows that there intersection must be as well.  In other words, $\Pi\cup \Pi'$ is pattern-Knuth closed.
\end{proof}

Let $\xi$ and $\ze$ are (not necessarily disjoint) subsequences of a permutation $\si$. 
Define $\xi\cup \ze$ to be the subsequence of $\si$ whose elements consists of those of $\xi$ together with those of $\ze$.
We write $\xi\uplus\ze$ if $\xi$ and $\ze$ are disjoint.  
The {\em shape} of a permutation, $\sh\si$, is the shape of its output tableaux under the Robinson-Schensted map.  Finally, let 
$\lis\si$ (respectively $\lds\si$) stand for the length of a longest increasing (respectively decreasing) subsequence of $\si$.

\begin{lem}\label{uid}
Suppose that $\si=\io\uplus\de$ where $\io$ is increasing of length $a$ and $\de$ is decreasing of length $b$. Then $\sh\si$ is one of the following
$$
(a,1^b),\ (a+1,1^{b-1}),\ (a,2,1^{b-2}).
$$
\end{lem}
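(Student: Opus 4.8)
The plan is to pin down $\sh\si=\la$ by controlling the two statistics that Robinson--Schensted makes visible, namely the first row $\la_1=\lis\si$ and the first column $\la_1^t=\lds\si$, and then to enumerate all partitions of $a+b$ compatible with the resulting bounds. The identity $\la_1=\lis\si$ is Theorem~\ref{RS}(b); the dual identity $\la_1^t=\lds\si$ follows by applying Theorem~\ref{RS}(b) to $\si^r$ together with Theorem~\ref{RS}(c), since $\lis(\si^r)=\lds(\si)$ and $\sh\si^r=(\sh\si)^t$.

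First I would establish four inequalities. Because $\io$ is an increasing subsequence of length $a$ and $\de$ is a decreasing subsequence of length $b$, we immediately get $\la_1\ge a$ and $\la_1^t\ge b$. For the upper bounds, the key observation is that the decomposition $\si=\io\uplus\de$ is disjoint and each piece is monotone: any increasing subsequence of $\si$ can use at most one element of the decreasing sequence $\de$, hence has length at most $a+1$, giving $\la_1\le a+1$; dually, any decreasing subsequence uses at most one element of $\io$, giving $\la_1^t\le b+1$. Thus $\la_1\in\{a,a+1\}$ and $\la_1^t\in\{b,b+1\}$, while $|\la|=a+b$.

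Then I would run the enumeration. Writing $\ell=\la_1^t$ for the number of parts, the number of cells of $\la$ lying strictly beyond the hook $(\la_1,1^{\ell-1})$ equals $e:=(a+b+1)-\la_1-\ell\ge0$. Checking the four combinations of $(\la_1,\ell)$: the choice $(a+1,b+1)$ forces $e=-1$ and is impossible; $(a+1,b)$ and $(a,b+1)$ each force $e=0$, yielding the hooks $(a+1,1^{b-1})$ and $(a,1^b)$ respectively; and $(a,b)$ forces $e=1$, so exactly one extra cell is appended to the hook $(a,1^{b-1})$. In this last case the extra cell must enlarge some part $\la_i$ with $i\ge2$ from $1$ to $2$, and the partition condition $\la_{i-1}\ge\la_i$ forces $i=2$, producing $(a,2,1^{b-2})$. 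This exhausts the possibilities and yields exactly the three listed shapes.

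The main obstacle is essentially bookkeeping rather than a conceptual difficulty: the two upper bounds $\lis\si\le a+1$ and $\lds\si\le b+1$ must be seen to rest squarely on the disjointness of $\si=\io\uplus\de$ and the monotonicity of each piece, and the $(\la_1,\ell)=(a,b)$ case requires the careful check that there is a \emph{unique} legal slot for the extra cell. I would also note the degenerate small cases (for instance $a=1$ or $b=1$, where $(a,2,1^{b-2})$ is not a genuine partition): these do not arise, since the corresponding $(\la_1,\ell)$ combination then fails the cell-count $|\la|=a+b$.
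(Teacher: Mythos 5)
Your proof is correct and follows essentially the same route as the paper's: both arguments bound $\lis\si\in\{a,a+1\}$ and $\lds\si\in\{b,b+1\}$ using the disjoint monotone decomposition, convert these into the first row and first column of $\sh\si$ via Theorem~\ref{RS}(b) and (c), and finish by counting cells to force one of the three listed shapes. Your four-way enumeration (including the explicit remark that $(\la_1,\la_1^t)=(a+1,b+1)$ is ruled out by the cell count, and the check that the extra cell in the $(a,b)$ case must sit in the $(2,2)$ box) is just a slightly more systematic packaging of the paper's three cases.
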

\begin{proof}
Because of the hypothesis we have $\lis\si=a$ or $a+1$ and $\lds\si=b$ or $b+1$.  We have three cases.

If $\lds\si=b+1$ then the first column of $\sh\si$ is of length $b+1$ by Theorem~\ref{RS} (b) and (c).  Furthermore, the length of the first row of $\sh\si$ is at least $a$.  But $\#\si=a+b$ so we must have $\sh\si=(a,1^b)$.  Similarly if $\lis\si=a+1$ then this forces $\sh\si=(a+1,1^{b-1})$.  Finally, assume $\lds\si=b$ and $\lis\si=a$.  So the first column of $\sh\si$ has $b$ elements and the first row has $a$, giving a total of $a+b-1$ entries. Thus the remaining entry must be in the $(2,2)$ box.
\end{proof}

We point out that the third case of this lemma can occur.  In fact, the smallest example where this case is needed is when $\si=65127843$.  Here $\si$ contains a unique increasing sequence of length $a =4$, namely $1278$, and a unique decreasing sequence of length $b= 4$, namely $6543$, and $\sh\si=(4,2,1,1)$.

For any partition $\mu$ recall that 
$$
K(\mu) =\bigcup_{T\in\SYT(\mu)}K(T)
$$
and define
$$
\fS_n(\mu)=\{\la\ptn n \mid \la\not\spe\mu\}.
$$

\bth\label{pkc shape}
Let $\mu = (a,1^b)$ and $\tau= (a,2,1^{b-1})$.  Then $\Pi=K(\mu) \cup K(\tau)$  is pattern-Knuth closed and 
$$
 Q_n(\Pi) = \sum_{\la\in\fS_n(\mu)}f^{\lambda}s_\lambda.
$$
\eth
\begin{proof}
To prove both statements, it suffices to show that for any permutation $\si$ we have $\si\in\fSb_n(\Pi)$ if and only if $\sh\si\spe(a,1^b)$.  For the forward direction, let $\ka$ be a copy of an element of either $K(\mu)$ or $K(\tau)$ in $\si$.  Then 
$\ka$, and hence $\si$, contains an increasing subsequence of length $a$ and a decreasing subsequence of length $b+1$.
It follows that 
$\sh\si\spe(a,1^b)$ as desired.

For the reverse, the assumption on $\sh\si$ means that $\si$ has a subsequence $\ka=\io\cup\de$ where $\io$ is increasing of length $a$ and $\de$ is decreasing of length $b+1$.  If the union is disjoint, then by the previous lemma we must have $\sh\ka$ is one of $(a,1^{b+1})$, $(a+1,1^b)$, or $(a,2,1^{b-1})$.  In the third case we are done since $\si$ contains an element of $K(\tau)$.  If we are in one of the first two cases then one can remove an element of $\de$ or $\io$, respectively, to show that $\si$ contains an element of $K(\mu)$.  If the union is not disjoint then $\io$ and $\de$ must overlap in precisely one element and an argument as in the proof of the lemma shows that $\sh\ka=(1^a,b)$, finishing the proof.
\end{proof}

We now begin our characterization of certain pattern-Knuth closed sets with some critical definitions.  We call the descents of $\pi^{-1}$ the \emph{$i$-descents} of $\pi$ and denote the set of all $i$-descents by $\iDes (\pi)$.  
An equivalent definition of $i$-descents is the following:  $a\in \iDes(\pi)$ if and only if $a+1$ is to the left of $a$ in $\pi$. 
We say a set of permutations $\Pi \sbe\fS_n$ is \emph{$i$-descent consistent} provided that $\iDes(\pi) = \iDes(\sigma)$ for all $\pi,\sigma\in \Pi$ and write $\iDes(\Pi)$ for this common set of $i$-descents.
Equivalently,  $\Pi\sbe\fS_n$ is $i$-descent consistent if and only if $\Pi\sbe D_J^{-1}$ for some $J\sbe[n-1]$, where 
\beq\label{eq:D_J}
D_J=\{\pi\in\fS_n  \mid \Des(\pi) = J\}
\eeq
and we take the inverse of a set by taking the inverse of each of its elements.
Recall that for any $S\in\SYT(n)$ we have $\iDes(\sigma) = \Des(S)$ for each $\sigma\in K(S)$ by Theorem~\ref{RS} (a) and (d).  Furthermore for any $J\sbe[n-1]$ 
$$ \bigcup_{S} K(S)=D_J^{-1}$$
where the union is over all $S\in \SYT(n)$ where $\Des(S) = J$. So, Knuth classes give a natural example of $i$-descent consistent sets. The next lemma will be important in our characterization of the pattern-Knuth closed sets which consist of a single Knuth class.
\begin{lem}\label{lem:ssh}
Fix $S\in \SYT(n)$.  Then the following are equivalent
 \begin{enumerate}[label =  \roman*)]
 	\item[(i)] $K(S)= D_J^{-1}$  for some $J\sbe[n-1]$,
	\item[(ii)] $K(S)= D_J^{-1}$ where $J = [1,k]$ or $[k,n-1]$ for some $1\le k\le n-1$,
 	\item[(iii)] $S$ is a superstandard hook.
 \end{enumerate}
\end{lem}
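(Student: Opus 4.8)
The plan is to prove the equivalence by first translating condition (i) into a statement about uniqueness of tableaux. Recall the displayed identity just before the lemma: for any $J\sbe[n-1]$ we have $D_J^{-1}=\bigcup_{T}K(T)$, the union being over all $T\in\SYT(n)$ with $\Des(T)=J$, and distinct SYT determine disjoint Knuth classes. Hence for $J=\Des(S)$ the equality $K(S)=D_J^{-1}$ holds if and only if $S$ is the \emph{unique} standard Young tableau on $[n]$ with descent set $\Des(S)$. So I would record at the outset that (i) is equivalent to the statement ``$S$ is the only SYT with its descent set,'' and that (ii)$\Rightarrow$(i) is immediate. It then remains to prove (iii)$\Rightarrow$(ii) and (i)$\Rightarrow$(iii).

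For (iii)$\Rightarrow$(ii) I would first compute directly that the row superstandard hook of shape $(r,1^{s-1})$ has descent set $[r,n-1]$ and the column superstandard hook has descent set of the form $[1,k]$ (here $i\in\Des$ exactly when $i+1$ lies in a lower row, and in a superstandard hook the rows of $1,2,\dots,n$ are easy to read off). This already gives $J$ the shape required in (ii). To get the equality $K(S)=D_J^{-1}$ — that is, the uniqueness from the first paragraph — I would show that any $T\in\SYT(n)$ with $\Des(T)=[k,n-1]$ must be the row superstandard hook: the non-descents at $1,\dots,k-1$ force $1,\dots,k$ into the first row, and then the descents at $k,\dots,n-1$ force $k+1,\dots,n$ into strictly increasing rows; since every entry below the first row is larger than all first-row entries, column-strictness forces each of $k+1,\dots,n$ into the first column, so $T$ is the row superstandard hook. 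The prefix case $J=[1,k]$ is symmetric (by transposition of tableaux, using that transposition complements descent sets, $\Des(T^t)=[n-1]\setminus\Des(T)$).

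For (i)$\Rightarrow$(iii), let $J=\Des(S)$ and let $H_J$ be the hook whose leg (the first column below the corner) holds the values $\{\,i+1:i\in J\,\}$ in increasing order and whose first row holds the remaining values. A short check gives $\Des(H_J)=J$, so by the uniqueness above $S=H_J$; in particular $S$ is a hook, and it is superstandard exactly when its leg set is an initial or final segment of $\{2,\dots,n\}$, equivalently when $J=[1,k]$ or $J=[k,n-1]$. Thus it suffices to show that if $J$ is neither a prefix nor a suffix interval then there is a \emph{second} SYT with descent set $J$, contradicting uniqueness. My main construction is this: after transposing if necessary (using $\Des(S^t)=[n-1]\setminus\Des(S)$), assume $1\in J$ and that the first row of $H_J$, of length $a$, satisfies $a\ge 3$; then relocate the entry in cell $(1,2)$ to cell $(2,2)$. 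Because $1\in J$ the top leg entry in $(2,1)$ is smaller, so this produces a valid SYT of the non-hook shape $(a-1,2,1^{b-1})$, and checking the three positions whose row changes shows the descent set is still $J$.

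The main obstacle is organizing the boundary cases where the first row or the leg is too short for the relocation above, since the transposition symmetry only trades $1\in J$ for $1\notin J$ and does not simultaneously fix both short-row and short-leg situations. These boundary cases reduce to single-element interior descent sets $J=\{d\}$ with $2\le d\le n-2$ (and their transposes): here $H_J$ has shape $(n-1,1)$, and moving the entry $d+2$ into cell $(2,2)$ gives a tableau of shape $(n-2,2)$ whose descent set is again $\{d\}$. Assembling the generic construction with these boundary cases via transposition yields the required second tableau in every non-superstandard case, completing the contradiction and hence the proof. (I would also note the degenerate reading of (ii): the single row and single column correspond to $J=\emptyset$ and $J=[n-1]$, which should be viewed as the endpoint intervals.)
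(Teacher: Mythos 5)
Your overall architecture matches the paper's: (ii)$\Rightarrow$(i) is immediate, (iii)$\Rightarrow$(ii) follows by computing descent sets of superstandard hooks together with a uniqueness argument, and (i)$\Rightarrow$(iii) proceeds by exhibiting a second SYT with the same descent set. (The paper's proof literally says ``it is easy to find another tableau $T$ with $\Des T=\Des S$'' and leaves it at that, so your attempt to make this explicit is exactly the content that matters; your first two implications are correct and more detailed than the paper's.) However, your main construction for (i)$\Rightarrow$(iii) is broken: after you normalize so that $1\in J$ and relocate the entry of cell $(1,2)$ to cell $(2,2)$, the first row closes up, and column $2$ then reads the old $(1,3)$ entry above the old $(1,2)$ entry, which is \emph{decreasing}; the result is never a standard Young tableau. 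Concretely, take $n=5$ and $J=\{1,3\}$: then $H_J$ has first row $1,3,5$ and leg $2,4$, and your move produces first row $1,5$, second row $2,3$, third row $4$, whose second column is $5$ above $3$. You verified the row condition in row $2$ (via the entry $2$ at $(2,1)$) but not the column condition in column $2$, and the latter fails for \emph{every} $J$ with $1\in J$, so this is not a boundary phenomenon that your $J=\{d\}$ cases can absorb.

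The repair is to normalize in the opposite direction. Assume $1\notin J$ (transposing if necessary), so that $2$ occupies cell $(1,2)$ and stays there. Since $J$ is nonempty and not a suffix interval $[k,n-1]$, some maximal run of $J$ ends at a value $f\le n-2$; set $b=f+2$. Then $b$ lies in the first row of $H_J$ in a column $\ge 3$ (both $1$ and $2$ precede it), $b-1=f+1$ lies in the leg, and moving $b$ to cell $(2,2)$ gives a valid SYT: column $2$ needs $2<b$ and row $2$ needs $(2,1)\le b-1<b$, both automatic. Your descent-preservation check then goes through verbatim, and the new tableau has non-hook shape, so it differs from $H_J$ and contradicts uniqueness. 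This is exactly the move the paper itself uses later, in the proof of Lemma~\ref{lem:doubles}, where it is applied to adjacent top-row entries $a<b$ with $2\le a$ and $b>a+1$; the hypothesis $2\le a$ there is precisely what rules out your failing case of moving the entry adjacent to $1$. With this normalization the construction is uniform, so your boundary-case analysis and the reduction to single-element interior descent sets become unnecessary; indeed your own treatment of $J=\{d\}$, which moves $d+2$ from column $d+1\ge 3$, is an instance of the corrected move rather than of your main one.
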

\bprf
Clearly (ii) implies (i).  The fact that (iii) implies (ii) follows directly from the definition of a superstandard hook.  The proof that (i) implies (iii) is by contradiction.  Assuming that $S$ is either a non-superstandard hook or not of hook shape, it is easy to find another tableau $T$ with $\Des T =\Des S = J$ so that $K(T)\sbe D_J^{-1}$.  Since Knuth classes are disjoint, this implies $K(S)\sbs D_J^{-1}$ which is the desired contradiction.
\eprf

The fact that Knuth classes are $i$-descent consistent gives a criterion for determining  when two permutations $\pi$ and $\sigma$ have distinct insertion tableaux:  If $\iDes(\pi) \neq \iDes(\sigma)$ then $P(\pi) \neq P(\sigma)$. We make repeated use of this criterion in what follows.  

Another related notion needed is that of swap closure.  This concept is due to Joel Lewis~\cite{pc:jl}.   For any $\pi\in \fS_n$ the operation of interchanging adjacent elements $\pi_i$ and $\pi_{i+1}$ in $\pi$ where $|\pi_i-\pi_{i+1}|>1$ is called a \emph{swap}.  We say two permutations are \emph{swap equivalent} if one can be obtained from the other via a sequence of swaps.  A set of permutations is called  \emph{swap closed} if it is closed under this equivalence relation.  In what follows we restrict our attention to swaps involving the largest element $n$.  As such we define $\overrightarrow\pi$ to be the result of swapping $n$ with its right neighbor.  In the case that this neighbor is $n-1$ or $n$ is the rightmost element of $\pi$ we set $\overrightarrow\pi = \pi$.  We also define $\overdoublerightarrow{\pi}$ to be the permutation obtained from $\pi$ by removing $n$ from its position and placing it on the right end of $\pi$.  We define 
$\overleftarrow{\pi}$ and $\overdoubleleftarrow{\pi}$ analogously.

The relationship between swap closure and the sets $D_J^{-1}$ is given by the following lemma. The following result was also obtained by Lewis but not published~\cite{pc:jl}.   

\begin{lem}\label{lem:swpclosed}
Let $\emptyset \neq \Pi\sbe \fS_n$.  Then $\Pi$ is swap closed if and only if 
$$
\Pi=\bigcup_{i=1}^s D_{J_i}^{-1}
$$
for some $J_1,\ldots, J_s\sbe [n-1]$. 
\end{lem}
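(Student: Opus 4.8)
The plan is to reduce the entire lemma to one clean fact about the invariant $\iDes$. Directly from the definition of $i$-descents, $\pi\in D_J^{-1}$ iff $\Des(\pi^{-1})=J$ iff $\iDes(\pi)=J$, so
$$
D_J^{-1}=\{\pi\in\fS_n \mid \iDes(\pi)=J\}.
$$
Thus the sets $D_J^{-1}$ are precisely the fibers of the map $\pi\mapsto\iDes(\pi)$, and the lemma is equivalent to saying that the swap-equivalence classes are exactly these fibers. I would isolate this as the \emph{key claim}: two permutations $\pi,\sigma\in\fS_n$ are swap equivalent (written $\pi\sim\sigma$) if and only if $\iDes(\pi)=\iDes(\sigma)$.

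The forward implication of the key claim is easy. A swap exchanges two adjacent entries whose values differ by more than $1$, hence it never changes the left-to-right order of any pair of consecutive integers $a,a+1$; since $a\in\iDes(\pi)$ is determined precisely by whether $a+1$ lies left of $a$, the set $\iDes$ is preserved by swaps. This already gives the ``$\Leftarrow$'' direction of the lemma: each $D_{J_i}^{-1}$ is a single fiber of $\iDes$ and is therefore swap closed, and any union of swap-closed sets is swap closed. It also shows every swap class lies inside one fiber, so the remaining ``$\Rightarrow$'' direction needs only that a swap class exhausts its fiber, i.e. the reverse implication of the key claim.

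For that reverse implication I would induct on $n$. Suppose $\iDes(\pi)=\iDes(\sigma)$. Whether $n-1\in\iDes(\pi)$ records whether $n$ sits left or right of $n-1$, and this is the same for $\pi$ and $\sigma$. Because $n$ can be swapped past every value except $n-1$, repeatedly applying $\pi\mapsto\overrightarrow\pi$ (or $\pi\mapsto\overleftarrow\pi$) slides $n$ until it is adjacent to $n-1$ on the prescribed side; call the results $\pi'$ and $\sigma'$, each swap equivalent to its original and each carrying $n$ glued to $n-1$ as a block. Deleting the entry $n$ gives $\bar\pi,\bar\sigma\in\fS_{n-1}$, and since removing $n$ does not disturb the order of any values $\le n-1$ we get $\iDes(\bar\pi)=\iDes(\pi')\cap[n-2]=\iDes(\sigma')\cap[n-2]=\iDes(\bar\sigma)$. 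By induction $\bar\pi$ and $\bar\sigma$ are swap equivalent in $\fS_{n-1}$.

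The main obstacle is to \emph{lift} a chain of swaps from $\fS_{n-1}$ back to $\fS_n$ while keeping $n$ glued to $n-1$. A single small swap of adjacent values $u,v$ with $|u-v|>1$ is harmless when neither equals $n-1$, since then the inserted $n$ does not lie between $u$ and $v$ and the swap is performed verbatim. The only delicate situations are when $u$ or $v$ equals $n-1$, so that the glued $n$ separates the two entries being exchanged; here a short explicit computation replaces the single move by two swaps (using that the other value is $\le n-3$, hence differs from $n$ by more than $1$) and restores the gluing. Carrying this out shows $\pi'\sim\sigma'$, whence $\pi\sim\sigma$, completing the key claim. With the key claim established, the ``$\Rightarrow$'' direction is immediate: a swap-closed $\Pi$ is a disjoint union of swap classes, each of which is now a full fiber $D_J^{-1}$, so $\Pi=\bigcup_i D_{J_i}^{-1}$.
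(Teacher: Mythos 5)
Your proof is correct, but it reaches the hard direction by a genuinely different mechanism than the paper. Both arguments share the easy observation that swaps preserve $\iDes$, so every swap class sits inside a single fiber $D_J^{-1}$, and both then reduce the lemma to showing that each fiber is a single swap class. The paper does this \emph{directly}, with no induction on $n$: for $J=\{j_1,\ldots,j_k\}$ it exhibits an explicit canonical representative $\pi_J$ (the increasing runs $1,\ldots,j_1$, then $j_1+1,\ldots,j_2$, etc., concatenated in decreasing order of their values) and shows any $\sigma\in D_J^{-1}$ can be normalized to $\pi_J$ by moving the elements $j_1, j_1-1,\ldots,1$ of the first run one at a time to the right end via swaps (legal because $j_1+1$ lies to the left of $j_1$, so nothing to the right of a run element differs from it by only $1$), and then repeating on the remaining runs. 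You instead induct on $n$: slide $n$ until it is glued to $n-1$ on the side dictated by $\iDes$, delete it, apply the inductive hypothesis in $\fS_{n-1}$, and lift the resulting swap chain, patching each swap that involves $n-1$ by two swaps (valid since the other entry is at most $n-3$). Your lifting step is the one place requiring care, and your treatment of it is right; the trade-off is that the paper's normalization argument is shorter and yields an explicit canonical form for each class, while your induction avoids any global description of a representative and uses only local, routine moves, at the cost of the glue-and-lift bookkeeping.
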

\begin{proof}
We first claim that if $\emptyset\neq \Pi\sbe D_J^{-1}$ for some $J\sbe[n-1]$ and $\Pi$ is swap closed then $\Pi = D_J^{-1}$.  Let 
$J= \{j_1,\ldots, j_k\}$ and define
	$$\pi_J := j_k+1,\ldots, n, j_{k-1}+1,\ldots j_k,\ldots,  j_1+1,\ldots, j_2, 1,\ldots, j_1\in D_J^{-1}.$$
	  We claim that $\pi_J$ is swap equivalent to each $\sigma\in D_J^{-1}$.  As $\iDes(\sigma) = J$ we have the maximal increasing subsequence $1\ldots j_1$ in $\sigma$.  Since $j_1+1$ is to the left of $j_1$ we can move the elements  $j_1,j_1-1,\dots,1$ in that order to the end of $\sigma$ by a sequence of swaps leaving all the other elements of $\sigma$ in the same relative positions.  Repeating this process yields $\pi_J$.   Thus all the elements  $\si\in D_J^{-1}$ are swap equivalent.  Since $\Pi\neq \emptyset$ and is a swap closed  subset of $D_J^{-1}$ we conclude that $\Pi = D_J^{-1}$.
	
Now assume $\Pi$ is nonempty and swap closed. By an argument similar to the previous paragraph, for each $J\sbe[n-1]$ such that 
$\Pi\cap D_J^{-1}  \neq \emptyset$ we have  $D_J^{-1}\sbe \Pi $.  The forward direction of our lemma now follows.  Since swaps interchange elements that differ by at least 2 we see that $\iDes$ is invariant under swaps.  As each $D_{J_i}^{-1}$ is the set of all permutations with $i$-descent set $J_i$ the reverse direction follows.
\end{proof}

We now wish to make a connection between pattern-Knuth closure and swap closure.  Note that the second statement of this result follows from the first and the previous lemma.

\begin{thm}\label{thm:swpclosed}
	If $\Pi\subseteq \fS_n$ is both pattern-Knuth closed and $i$-descent consistent then $\Pi$ is swap closed.  Furthermore, when $\Pi\neq \emptyset$ we have $\Pi=D_J^{-1}$ for some $J\sbe[n-1]$.  
\end{thm}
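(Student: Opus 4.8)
The plan is to derive the ``Furthermore'' from the first assertion together with Lemma~\ref{lem:swpclosed}. Since $\Pi$ is $i$-descent consistent, $\emp\neq\Pi\sbe D_J^{-1}$ where $J=\iDes(\Pi)$; and the opening paragraph of the proof of Lemma~\ref{lem:swpclosed} shows that any nonempty swap closed subset of $D_J^{-1}$ is all of $D_J^{-1}$. Thus it suffices to prove $\Pi$ is swap closed, i.e.\ that if $\si\in\Pi$ and $\si'$ is a single swap of $\si$ then $\si'\in\Pi$. Two preliminary observations streamline this. A swap interchanges adjacent entries differing by at least $2$, hence preserves $\iDes$; so $\si'\in D_J^{-1}$ automatically and only membership in $\Pi$ is at issue. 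Moreover, taking $k=n$ in the definition of pattern-Knuth closure gives that $\Pi=\fSb_n(\Pi)$ is a union of Knuth classes, so $\si\in\Pi$ is the same as $K(P(\si))\sbe\Pi$.

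Next I would dispatch the swaps that are already Knuth moves. By Theorem~\ref{Kequiv} the Knuth relations $acb\leftrightarrow cab$ and $bac\leftrightarrow bca$ (with $a<b<c$) are swaps of the extreme values $a,c$ performed in the presence of an adjacent copy of the middle value $b$. So if the swap carrying $\si$ to $\si'$ has a \emph{witness} — a value strictly between the two swapped entries lying immediately to their left or right — then $\si\Cong\si'$, whence $P(\si')=P(\si)$ and $\si'\in\Pi$ since $\Pi$ is a union of Knuth classes. Only \emph{witnessless} swaps remain; the smallest is $4213\mapsto 2413$, which changes the insertion tableau from shape $(2,1^2)$ to $(2^2)$ and therefore cannot be resolved purely at level $n$.

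For a witnessless swap I would argue by contradiction using pattern-Knuth closure one level higher. Suppose $\si'\notin\Pi$; as $\Pi$ is a union of Knuth classes this says $K(\si')\cap\Pi=\emp$. The aim is to construct two Knuth equivalent permutations $\tau\Cong\tau^\circ$ in $\fS_{n+1}$ such that $\tau^\circ$ contains $\si$ as a pattern while every length-$n$ pattern of $\tau$ whose $i$-descent set equals $J$ is Knuth equivalent to $\si'$. The first property places $\tau^\circ$ in $\fSb_{n+1}(\Pi)$. The second, together with $i$-descent consistency (a pattern with $i$-descent set $\neq J$ cannot lie in $\Pi\sbe D_J^{-1}$) and the hypothesis $K(\si')\cap\Pi=\emp$, forces $\tau\in\fS_{n+1}(\Pi)$. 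But $\fSb_{n+1}(\Pi)$ is a union of Knuth classes and $\tau\Cong\tau^\circ$, so $\tau$ would have to meet $\Pi$ as well — a contradiction. Hence $\si'\in\Pi$.

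The hard part is exactly the construction in the previous paragraph. The obvious idea — insert a value strictly between the swapped entries to turn the witnessless swap into a genuine Knuth move — fails, because the resulting $\tau$ still contains $\si$ as a pattern (deleting the original left entry recovers it), so $\tau$ does not avoid $\Pi$ and no contradiction arises; I checked this already for $4213\mapsto 2413$. What is required instead is a lift that genuinely relocates the cell by which $\sh P(\si)$ and $\sh P(\si')$ differ, arranged so that the only length-$n$ sub-patterns of $\tau$ carrying $i$-descent set $J$ are Knuth equivalent to $\si'$. Verifying this $i$-descent bookkeeping across all single-entry deletions of $\tau$ is the crux of the argument. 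I expect it to be cleanest to first reduce, via the operations $\overrightarrow{\cdot}$ and $\overdoublerightarrow{\cdot}$, to swaps that move the largest entry $n$ — so that the migrating cell is the one occupied by $n$ in the insertion tableau — and then to choose the lift so as to control the interaction of $n$ with $n-1$.
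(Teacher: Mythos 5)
Your reductions are sound as far as they go: the ``Furthermore'' statement does follow from swap closure plus Lemma~\ref{lem:swpclosed}; swaps preserve $\iDes$; $\Pi=\fSb_n(\Pi)$ is a union of Knuth classes; and your observation that a swap possessing an adjacent witness strictly between the two swapped values is literally a Knuth move (Theorem~\ref{Kequiv}), hence stays inside $\Pi$, is correct and is a genuinely nice way to dispatch those cases. Your contradiction framework for the remaining swaps is also logically valid. But the proof has a real gap, and you have located it yourself: the construction of the lift $\tau\Cong\tau^\circ$ in $\fS_{n+1}$ --- the only place where anything can actually fail --- is never carried out. You state the properties $\tau$ would need, observe that the obvious candidate does not have them, and close with an expectation about how a correct choice might be organized. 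That is a plan, not a proof; for a witnessless swap such as $4213\mapsto 2413$, where the insertion shape changes, everything hinges on exhibiting a concrete lift and verifying the deletion bookkeeping, and nothing in the proposal does this.

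For comparison, the paper's proof spends essentially all of its effort on exactly this point, and its structure shows how much is involved. It proceeds by induction on $n$: after replacing $\Pi$ by $\Pi^r$ so that $n-1\notin\iDes(\Pi)$, Lemma~\ref{swpright} handles right swaps of $n$ (add $n^-$ beside $n$, perform the Knuth move, and use $\iDes$ to pin down which element must be deleted to land back in $\Pi$); then $\overdoublerightarrow{\pi}\in\Pi$ feeds Lemma~\ref{pkcinduct}, so $\widehat{\Pi}$ is pattern-Knuth closed and, by induction, swap closed, which disposes of swaps not involving $n$; finally, left swaps of $n$ split into two cases according to whether $n-2\in\iDes(\Pi)$, the first resolved by the four-point deletion Lemma~\ref{4points} and the second by an iterated argument interleaving right swaps of $n$ with left swaps of $n-1$. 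Your witnessed/witnessless dichotomy and your intention to ``reduce to swaps that move $n$'' are compatible with this outline, but they do not replace Lemmas~\ref{4points}, \ref{swpright}, \ref{pkcinduct}, or the two-case analysis: the $i$-descent pinning arguments there are precisely the ``bookkeeping across all single-entry deletions'' that you defer. Note also that your requirement is stronger than necessary --- the paper never needs \emph{every} length-$n$ pattern of the lift with $i$-descent set $J$ to be Knuth equivalent to the target; it only needs that \emph{some} deletion lies in $\Pi$ and that $\iDes$ considerations force that deletion to be the swapped permutation, which is a lighter burden and is why the paper can argue directly rather than by contradiction. Until a construction of this kind is supplied, the theorem is not proved.
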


To prove this theorem we begin with some preliminaries.   Given a permutation $\pi\in \fS_n$, and integers $1\leq i\leq n$ and $1\leq m\leq n$ we can construct a new permutation $\sigma$ by standardizing 
$$\pi_1,\ldots, \pi_{i-1},m^{+}, \pi_{i},\ldots, \pi_n$$
where $m^+:=m+1/2$.   For simplicity we refer to this operation by saying that $\sigma$ is the result of \emph{adding $m^+$ to $\pi$ in position $i$}.   Of course we may also add $m^-= m-1/2$ to $\sigma$ where this is defined analogously.  When adding an element to a permutation we take the standardization to be implicit.  For example if we add $3^-$ to $\pi = 132$ in position 3 we write $133^-2\in \fS_4$ instead of $1432\in \fS_4$ and refer to $3^-$ or $3$ instead of $3$ or $4$ respectively. 

We  write $\pi - m$ to denote the permutation obtained by deleting the value $m$ from $\pi$.  
When subtracting an element, we always refer to the elements of $\pi-m$ in their standardized form, as opposed to the convention for adding an element.  For example, if $\pi\in\fS_n$ then in $\pi-(n-1)$ the element $n-1$ is where $n$ is in $\pi$.
When $m$ is the largest value we instead write $\widehat\pi$ for $\pi-n$ and extend this notation to sets of permutations in the usual way.  Finally we also use this notation in the context of standard Young tableaux.  For any $S\in \SYT(n)$ we define $\widehat S$ to be the standard Young tableau obtained by deleting $n$ from $S$.

\begin{lem}\label{4points}
Assume $\pi\in \fS$ contain the subsequence $m-3, m-1, m, m-2$ for some $m$.  Then $\iDes(\pi - x) = \iDes(\pi - (m-2))$ if and only if $x = m-2$.  
\end{lem}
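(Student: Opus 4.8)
The forward implication is trivial, so the plan is to prove the contrapositive of the reverse implication: for every value $x\neq m-2$, exhibit a single index that distinguishes $\iDes(\pi-x)$ from $\iDes(\pi-(m-2))$. Throughout, for distinct values $a,b$ I would write $a\prec b$ to mean that $a$ lies to the left of $b$ in $\pi$, so that $a\in\iDes(\pi)$ exactly when $a+1\prec a$. The hypothesis says $m-3\prec m-1\prec m\prec m-2$, and from this one reads off the three local facts $m-3\notin\iDes(\pi)$, $m-2\in\iDes(\pi)$, $m-1\notin\iDes(\pi)$ (the first because $m-3\prec m-2$, the second because $m-1\prec m-2$, the third because $m-1\prec m$), together with $m\prec m-2$.

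The key bookkeeping step is to record how $\iDes$ changes under deletion of a single value $x$, remembering that in $\pi-x$ the surviving values are restandardized. Regarding $\iDes(\pi-x)$ as a set of indices, one checks directly that: for $w\le x-2$ one has $w\in\iDes(\pi-x)$ iff $w\in\iDes(\pi)$; the junction index satisfies $x-1\in\iDes(\pi-x)$ iff $x+1\prec x-1$; and for $w\ge x$ one has $w\in\iDes(\pi-x)$ iff $w+1\in\iDes(\pi)$. Applying this with $x=m-2$ and using the local facts shows that both the index $m-3$ (the junction, which would require $m-1\prec m-3$, false) and the index $m-2$ (governed by $m-1\in\iDes(\pi)$, false) are absent from $\iDes(\pi-(m-2))$.

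It then remains to verify that for each $x\neq m-2$ at least one of the indices $m-3$ or $m-2$ is a descent of $\pi-x$; since each is a non-descent of $\pi-(m-2)$, this produces the witness. This is a short finite check against the deletion formula: when $x\le m-3$ the index $m-3$ lies in the upper regime (indices $\ge x$) and is present because $m-2\in\iDes(\pi)$; when $x=m-1$ the junction index $m-2$ is present because $m\prec m-2$; and when $x\ge m$ the index $m-2$ lies in the lower regime (indices $\le x-2$) and is present because $m-2\in\iDes(\pi)$. These three subcases exhaust $x\neq m-2$, and in each the chosen index is a descent of $\pi-x$ but not of $\pi-(m-2)$, completing the contrapositive. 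I expect the only delicate point to be the restandardization bookkeeping in the deletion formula, namely correctly identifying which pair of original values each index of $\pi-x$ refers to; once that is pinned down, the case analysis is routine.
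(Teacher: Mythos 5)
Your proof is correct and takes essentially the same approach as the paper: both show that $m-3$ and $m-2$ are non-elements of $\iDes(\pi-(m-2))$ and then exhibit, for each $x\neq m-2$, one of these two indices as an $i$-descent of $\pi-x$ (the index $m-3$ when $x<m-2$, the index $m-2$ when $x>m-2$). Your explicit three-regime deletion formula just spells out the restandardization bookkeeping that the paper's proof leaves implicit, and your split of $x>m-2$ into $x=m-1$ versus $x\ge m$ corresponds exactly to the paper's observation that both $m-1$ and $m$ lie to the left of $m-2$.
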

\begin{proof}
Because of the given configuration of elements it follows that we have $m-3,m-2\notin \iDes(\pi-(m-2))$.  
If $x< m-2$ then we see  $m-3\in\iDes(\pi-x)$ since $m-1$ is to the left of $m-2$ in $\pi$.  Similarly, if $x>m-2$ then $m-2\in \iDes(\pi-x)$ since both $m-1$ and $m$ are to the left of $m-2$ in $\pi$.  The lemma now follows.  
\end{proof}

\begin{lem}\label{swpright}
Let $\Pi\subseteq \fS_n$ be pattern-Knuth closed.  Assume $\Pi$ is such that $n-1\notin\iDes(\sigma)$ for all $\sigma\in \Pi$.  Then $\overrightarrow\pi\in \Pi$ for all  $\pi\in\Pi$.  
\end{lem}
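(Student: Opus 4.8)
The plan is to dispose of the trivial case, reduce to a single swap, and then exploit pattern-Knuth closure one level up, in $\fS_{n+1}$. If $n$ is the rightmost entry of $\pi$ there is nothing to prove, so assume $\pi=\al\,n\,m\,\be$, where $m$ is the right neighbour of $n$. Since $\pi\in\Pi$, the hypothesis gives $n-1\notin\iDes(\pi)$, i.e.\ $n-1$ lies to the left of $n$; in particular $m\neq n-1$, so $m\le n-2$, we have $\overrightarrow\pi=\al\,m\,n\,\be$, and $n-1$ occurs inside $\al$. I would first record the elementary fact that $\iDes(\overrightarrow\pi)=\iDes(\pi)$: the two permutations differ only by transposing the values $n$ and $m$, and because these are not consecutive integers, no pair of consecutive values changes its relative order. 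In particular $\overrightarrow\pi$ again satisfies $n-1\notin\iDes$, so it is a legitimate candidate for membership in $\Pi$.

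The engine is the level-$(n+1)$ instance of closure: $\fSb_{n+1}(\Pi)$ is a union of Knuth classes. Since every pattern in $\Pi$ has length $n$, a permutation $\zeta\in\fS_{n+1}$ lies in $\fSb_{n+1}(\Pi)$ if and only if some one-element deletion $\zeta-x$ standardizes to a member of $\Pi$, and any such member must have $n-1\notin\iDes$. I would introduce the witness $\zeta:=\al\,m\,(n+1)\,n\,\be$, obtained by inserting the new maximum $n+1$ immediately before $n$ in $\overrightarrow\pi$. Its three largest values occur in the positional order $n-1$ (inside $\al$), then $n+1$, then $n$. A direct computation on deletions then shows: deleting any $x\le n-1$ leaves the two largest survivors $n$ and $n+1$ with $n+1$ still to the left of $n$, forcing $n-1\in\iDes(\zeta-x)$; whereas deleting $x=n$ or $x=n+1$ gives $n-1\notin\iDes(\zeta-x)$, and in both cases $\zeta-x$ standardizes to exactly $\overrightarrow\pi$. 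Thus the only deletions of $\zeta$ that can possibly lie in $\Pi$ produce $\overrightarrow\pi$.

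It remains to certify $\zeta\in\fSb_{n+1}(\Pi)$, and this is the crux. I would apply a single Knuth move (Theorem~\ref{Kequiv}) to the factor $m\,(n+1)\,n$ of $\zeta$: since $m<n<n+1$ this factor has the form $acb$, and the relation $acb\leftrightarrow cab$ turns it into $(n+1)\,m\,n$, so $\zeta$ is Knuth equivalent to $\rho:=\al\,(n+1)\,m\,n\,\be$. Deleting the value $n$ from $\rho$, so that $n+1$ restandardizes to $n$, yields $\al\,n\,m\,\be=\pi$; hence $\rho$ contains $\pi\in\Pi$, so $\rho\in\fSb_{n+1}(\Pi)$. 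Because $\fSb_{n+1}(\Pi)$ is a union of Knuth classes and $\zeta$ is Knuth equivalent to $\rho$, we conclude $\zeta\in\fSb_{n+1}(\Pi)$. Combined with the previous paragraph, some deletion of $\zeta$ lies in $\Pi$, that deletion is forced to equal $\overrightarrow\pi$, and therefore $\overrightarrow\pi\in\Pi$. The main obstacle is precisely this coordination: designing a single witness $\zeta$ that simultaneously (i) transports by one Knuth move to a permutation visibly containing $\pi$, and (ii) has the rigid $i$-descent profile isolating $\overrightarrow\pi$ among its admissible deletions. Placing $n+1$ immediately before $n$ is what makes both requirements hold at once; the only bookkeeping to check is the $\iDes$ computation on all deletions and the degenerate case in which $\be$ is empty, which is routine.
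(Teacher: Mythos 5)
Your proof is correct and is essentially the paper's own argument: your witness $\zeta=\al\, m\,(n+1)\, n\,\be$ is, after standardization, exactly the permutation the paper builds by inserting $n^-$ next to $n$ and applying the same single Knuth move, and your forced-deletion analysis (any deletion of a value $\le n-1$ creates the forbidden $i$-descent $n-1$, while deleting $n$ or $n+1$ yields $\overrightarrow{\pi}$) is identical to the paper's. The only differences are cosmetic: you start from $\overrightarrow{\pi}$ and adjoin a new maximum $n+1$, whereas the paper starts from the $\pi$-containing permutation and adjoins $n^-$, traversing the same Knuth equivalence in the opposite direction.
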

\begin{proof}
	Fix $\pi\in \Pi$ with $\pi_i = n$ so that $\pi_{i+1}<n$.   Now add $n^-$ to $\pi$ in position $i+2$ and use it to interchange $n$ and $\pi_{i+1}$ via a Knuth move to obtain
	$$\rho:=\ldots, \pi_{i+1},n, n^-,\ldots\in \fSb_{n+1}(\Pi).$$
	Since all patterns in $\Pi$ have length $n$, there exists some $x$ such that $\rho-x\in \Pi$. If $x\neq n, n^-$ then $n-1\in \iDes(\rho-x)$ in which case $\rho-x\notin \Pi$.  So $x$ is one of $n,n^-$ and  $\overrightarrow\pi = \rho-x\in \Pi$.  
\end{proof}

Next we prove a lemma to help with our inductive proofs of both Theorems~\ref{thm:swpclosed} and \ref{thm:pairs}.

\begin{lem}\label{pkcinduct}
Assume $\Pi\sbe \fS_n$ is pattern-Knuth closed with the property that for each $\pi\in \Pi$, we have either $\overdoublerightarrow{\pi}\in \Pi$ or $\overdoubleleftarrow{\pi}\in \Pi$.
Then $\widehat{\Pi}$  is pattern-Knuth closed.  
\end{lem}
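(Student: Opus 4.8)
The plan is to prove directly that $\fSb_k(\widehat\Pi)$ is closed under Knuth equivalence for every $k$; by Theorem~\ref{Kequiv} this is exactly the assertion that $\fSb_k(\widehat\Pi)$ is a union of Knuth classes, i.e. that $\widehat\Pi$ is pattern-Knuth closed. The conceptual bridge to the hypothesis on $\Pi$ is the observation that $\widehat\pi=\widehat{\overdoublerightarrow{\pi}}=\widehat{\overdoubleleftarrow{\pi}}$: deleting the maximal entry $n$ from $\pi$, from $\overdoublerightarrow{\pi}$, or from $\overdoubleleftarrow{\pi}$ all yield the same permutation. Hence the assumption that $\overdoublerightarrow{\pi}\in\Pi$ or $\overdoubleleftarrow{\pi}\in\Pi$ says precisely that every pattern $\widehat\pi\in\widehat\Pi$ admits a representative in $\Pi$ whose maximum sits at the right end or at the left end; and when $\overdoublerightarrow{\pi}\in\Pi$, this representative is just $\widehat\pi$ with a new maximal entry appended on the right.

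First I would set up a lifting operation. For $\tau\in\fS_k$, let $\tau^{+}\in\fS_{k+1}$ be the result of appending a new largest entry $k+1$ at the right end, and let ${}^{+}\tau$ be the result of prepending it at the left. The forward lift is then immediate: if $\tau$ contains $\widehat\pi$ and $\overdoublerightarrow{\pi}\in\Pi$, then adjoining the final entry $k+1$ to an occurrence of $\widehat\pi$ in $\tau$ produces an occurrence of $\overdoublerightarrow{\pi}$ (which is $\widehat\pi$ with a maximum appended) in $\tau^{+}$, so $\tau^{+}\in\fSb_{k+1}(\Pi)$. Symmetrically, ${}^{+}\tau\in\fSb_{k+1}(\Pi)$ when $\overdoubleleftarrow{\pi}\in\Pi$.

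Next I would transfer a Knuth equivalence upstairs. Given that $\tau'$ is Knuth equivalent to $\tau$, each Knuth move acts on three adjacent entries, all of which lie strictly below the adjoined maximum; so the identical sequence of moves shows that $\tau'^{+}$ is Knuth equivalent to $\tau^{+}$ (and likewise ${}^{+}\tau'$ to ${}^{+}\tau$) — in other words, appending or prepending a fixed largest entry commutes with Knuth moves. Since $\Pi$ is pattern-Knuth closed, $\fSb_{k+1}(\Pi)$ is a union of Knuth classes, so from $\tau^{+}\in\fSb_{k+1}(\Pi)$ we deduce $\tau'^{+}\in\fSb_{k+1}(\Pi)$ (respectively ${}^{+}\tau'\in\fSb_{k+1}(\Pi)$).

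The final and most delicate step is the descent back to $\fS_k$, and this is where I expect the only real obstacle: from $\tau'^{+}\in\fSb_{k+1}(\Pi)$ I must recover that $\tau'$ itself contains a pattern of $\widehat\Pi$. Write $\rho\in\Pi$ for a pattern occurring in $\tau'^{+}$ and split on whether that occurrence uses the adjoined entry $k+1$. If it does not, the occurrence lies inside $\tau'$, so $\tau'$ contains $\rho$ and hence its subpattern $\widehat\rho\in\widehat\Pi$. If it does, then $k+1$ is simultaneously the rightmost and the largest entry of the occurrence, so it can only realize the maximal entry $n$ of $\rho$ in its last position; thus $\rho=\overdoublerightarrow{\rho}$ and deleting $k+1$ leaves an occurrence of $\widehat\rho\in\widehat\Pi$ in $\tau'$. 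In either case $\tau'\in\fSb_k(\widehat\Pi)$, and the prepended case is identical with ``right/last'' replaced by ``left/first''. The key point that makes this reverse passage go through is exactly that an adjoined extremal entry can only play the role of the extremal entry of $\rho$, which forces $\rho$ into the ``max-at-an-end'' form needed to descend.
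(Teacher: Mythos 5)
Your proposal is correct and follows essentially the same route as the paper's proof: lift a Knuth-equivalent pair $\tau\sim\tau'$ to $\tau,k+1\sim\tau',k+1$ (or prepend in the $\overdoubleleftarrow{\pi}$ case), use that $\tau,k+1$ contains $\overdoublerightarrow{\pi}\in\Pi$ together with pattern-Knuth closure of $\Pi$, and then descend to conclude $\tau'\in\fSb_k(\widehat\Pi)$. The only difference is that you spell out the final descent step (splitting on whether the occurrence in $\tau',k+1$ uses the adjoined maximum), which the paper states without justification; this is a welcome elaboration, not a different argument.
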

\begin{proof}
To show $\widehat\Pi$ is pattern-Knuth closed, take $\sigma\in \fSb_k(\widehat\Pi)$ for some $k$ and consider any $\rho\in \fS_k$ Knuth-equivalent to $\sigma$ with the aim of showing $\rho\in \fSb_k(\widehat\Pi)$.    
In particular, assume $\sigma$ contains the pattern $\widehat\pi\in \widehat{\Pi}$.  Now consider the case when $\overdoublerightarrow{\pi}\in \Pi$ and observe that the concatenation $\sigma,k+1$ contains $\overdoublerightarrow{\pi}$ and hence $\sigma,k+1\in \fSb_{k+1}(\Pi)$.    
Since $\rho$ and $\sigma$ are Knuth-equivalent we have $\rho,k+1$ and  $\sigma,k+1$ are too.  
As $\Pi$ is pattern-Knuth closed $\rho, k+1\in \fSb_{k+1}(\Pi)$. Hence $\rho\in \fSb_{k}(\widehat\Pi)$ as needed.  

The case when  $\overdoubleleftarrow\pi\in \Pi$ follows by an analogous argument and so the details are omitted. 
\end{proof}

We are now in a position to prove Theorem~\ref{thm:swpclosed}.

\begin{proof}[Proof of Theorem~\ref{thm:swpclosed}]

The second assertion follows from the first and Lemma~\ref{lem:swpclosed}. The first statement of the theorem certainly holds when $n=1$.  We now proceed by induction on $n$ with $\Pi\sbe\fS_n$ where $n>1$.  By considering $\Pi^r$ if necessary we may assume $n-1\notin \iDes(\Pi)$.

It follows from Lemma~\ref{swpright} that $\overrightarrow{\pi} \in \Pi$ for all $\pi\in \Pi$.
So we can always swap $n$ to the right and remain in $\Pi$. Hence $\overdoublerightarrow\pi\in \Pi$ for all $\pi\in \Pi$ since $n-1$ is to the left of $n$ in $\pi$.  Therefore we know by Lemma~\ref{pkcinduct} that $\widehat\Pi\sbe \fS_{n-1}$ is pattern-Knuth closed.  Clearly $\widehat\Pi$ is $i$-descent consistent and so we conclude by induction that $\widehat\Pi$ is swap closed. This means that if we take any $\overdoublerightarrow{\pi}\in \Pi$ and swap elements neither of which are $n$ then the result is in $\Pi$. Consequently it now suffices to show that $\overleftarrow{\pi}\in \Pi$ for any  $\pi\in \Pi$.  To this end fix $\pi\in \Pi$ and set $\pi_i=n$ where $\pi_{i-1}\le n-2$.  We consider two cases.

\medskip
\noindent
\textbf{Case 1:} $n-2\notin \iDes(\Pi)$

In this case $n-2, n-1, n$ is a subsequence of $\pi$.    Add $(n-1)^-$ to $\pi$ in position $i+1$ and use it to interchange $\pi_{i-1}$ and $n$ via a Knuth move to obtain
$$
\rho:=\ldots, n-2,\ldots, n-1,\ldots, n,\pi_{i-1},(n-1)^-,\ldots\in \fSb_{n+1}(\Pi).
$$    
Let $x$ be such that $\rho-x\in \Pi$ so that $\iDes(\rho-x) = \iDes(\Pi)$.  Observe that $\rho-(n-1)^- = \overleftarrow{\pi}$ and hence $\iDes(\rho-(n-1)^-) = \iDes(\Pi)$.  By Lemma~\ref{4points} it now follows that we must have $x= (n-1)^-$ proving, in this case, that $\overleftarrow{\pi}\in \Pi$.  

\medskip
\noindent
\textbf{Case 2:} $n-2\in\iDes(\Pi)$

Add $n^-$ to $\pi$ in position $i-1$ and use it to interchange $n$ and $\pi_{i-1}$ via a Knuth move to obtain 
	$$\rho:=\ldots, n-1,\ldots, n^-,n,\pi_{i-1},\ldots \in \fSb_{n+1}(\Pi),$$ 
	where $n-2$ is not shown but is to the right of $n-1$.  Let $x$ be such that $\rho-x\in \Pi$ and $\iDes(\rho-x) = \iDes(\Pi)$.  As $n-2\in\iDes(\Pi)$ it follows that $x = n-1, n^-$, or $n$.  If $x= n, n^-$ we are done as $\overleftarrow{\pi} = \rho-x \in \Pi$.

	If $x = n-1$ then $n-2$ must be to the right of $n$ in $\rho$ so that  $n-2\in\iDes(\rho-x)$. As $n$ and $n-2$ are to the right of $n^-$ in  $\rho$ it follows that $\rho-n^-$ is obtainable from  $\rho-(n-1)$ by swapping $n-1$ left.  Define $\sigma=\rho -(n-1)$ so 
	$$\sigma= \ldots, \pi_{i-2},n-1,n,\pi_{i-1},\ldots\in \Pi$$
	where $n-2$ is not shown but is to the right of $n$.  It now suffices to show that we can swap $n-1$ to the left an arbitrary number of times and stay in $\Pi$.  Add $(n-1)^-$ to $\sigma$ in position $i-2$ and apply a Knuth move to obtain
	$$\ldots, (n-1)^-,n-1,\pi_{i-2},n,\pi_{i-1},\ldots \in \fSb_{n+1}(\Pi).$$
As $n-2\in \iDes(\Pi)$, we must delete $y = (n-1)^-,n-1$, or $n$ to obtain a pattern in $\Pi$.  If $y$ is one of the first two we are done.  If $y = n$, then, by the second paragraph in this proof, we can swap $n$ to the right once so that we obtain $\si$ with $n-1$ and 
$\pi_{i-2}$ interchanged which must still be in $\Pi$.  Repeating this argument demonstrates that we can swap $n-1$ left as needed.  
	\end{proof}

In \cite{hps:paq} the authors prove in Theorem~5.8 that $K(T)$ is pattern-Knuth closed if and only if $T$ is a superstandard hook.  We are now in position to give a more conceptual explanation as to why this theorem holds as well as  place its statement in a more general framework.  
\begin{thm}\label{thm:single}
Suppose $\Pi= K(S)$ for some $S\in \SYT(n)$.  Then the following are equivalent:
\begin{enumerate}[label = \roman*)]
	\item[(i)] $\Pi$ is pattern-Knuth closed,
	\item[(ii)] $\Pi$ is swap closed,
	\item[(iii)] $\Pi = D_J^{-1}$ where $J = [1,k]$ or $J = [k,n-1]$ for some $k$,  
	\item[(iv)] $S$ is a superstandard hook.
\end{enumerate}
\end{thm}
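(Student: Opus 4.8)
The plan is to prove the cyclic chain of implications $(\mathrm{iv})\Rightarrow(\mathrm{i})\Rightarrow(\mathrm{ii})\Rightarrow(\mathrm{iii})\Rightarrow(\mathrm{iv})$, so that each of the four conditions implies the next around the loop. The crucial preliminary fact, already recorded before Lemma~\ref{lem:ssh}, is that a single Knuth class $\Pi=K(S)$ is automatically $i$-descent consistent: every $\sigma\in K(S)$ satisfies $\iDes(\sigma)=\Des(S)$ by Theorem~\ref{RS} (a) and (d). This consistency is exactly the hypothesis that lets the machinery of this section engage, and it is what makes several of the steps below immediate rather than requiring fresh combinatorial work.

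Two of the four implications are essentially quotations of earlier results. For $(\mathrm{iv})\Rightarrow(\mathrm{i})$, I would simply invoke Theorem~\ref{ss}: if $S$ is a superstandard hook then $\fS_n(K(S))$ is a union of Knuth classes for all $n$, which is the definition of pattern-Knuth closed. For $(\mathrm{i})\Rightarrow(\mathrm{ii})$, since $\Pi=K(S)$ is pattern-Knuth closed by assumption and $i$-descent consistent by the observation above, Theorem~\ref{thm:swpclosed} applies directly and yields that $\Pi$ is swap closed. Finally $(\mathrm{iii})\Rightarrow(\mathrm{iv})$ is nothing more than the implication $(\mathrm{ii})\Rightarrow(\mathrm{iii})$ internal to Lemma~\ref{lem:ssh}.

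The only implication requiring a short argument is $(\mathrm{ii})\Rightarrow(\mathrm{iii})$, and this is where $i$-descent consistency does its real work. Assuming $\Pi=K(S)$ is swap closed, note $\Pi$ is nonempty, so Lemma~\ref{lem:swpclosed} expresses it as $\Pi=\bigcup_{i=1}^{s}D_{J_i}^{-1}$ for certain $J_1,\dots,J_s\sbe[n-1]$. But each block $D_{J_i}^{-1}$ consists precisely of the permutations whose $i$-descent set equals $J_i$; a union over two or more distinct $J_i$ could therefore never be $i$-descent consistent. Since $K(S)$ is $i$-descent consistent, there can be only one block, forcing $\Pi=D_J^{-1}$ for a single $J$. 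This is exactly condition (i) of Lemma~\ref{lem:ssh}, and that lemma upgrades it to its condition (ii), namely $J=[1,k]$ or $J=[k,n-1]$, which is precisely (iii). This closes the cycle and establishes the equivalence of all four conditions.

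The genuinely hard content here is imported rather than produced: the technical heart is Theorem~\ref{thm:swpclosed}, whose inductive proof carries the burden of showing that pattern-Knuth closure plus $i$-descent consistency forces swap closure, together with Lemma~\ref{lem:ssh}, which pins down exactly which single sets $D_J^{-1}$ arise as Knuth classes. Given these, the present theorem is bookkeeping, and the one step I would take care not to skip is the reduction of a union $\bigcup_{i}D_{J_i}^{-1}$ to a single $D_J^{-1}$ via $i$-descent consistency, since this is the hinge that connects the swap-closed description back to the superstandard hook characterization.
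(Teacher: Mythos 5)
Your proof is correct and follows essentially the same route as the paper's: the same cycle of implications (merely started at a different vertex), with (i)$\Rightarrow$(ii) via Theorem~\ref{thm:swpclosed}, (ii)$\Rightarrow$(iii) via $i$-descent consistency together with Lemmas~\ref{lem:swpclosed} and~\ref{lem:ssh}, (iii)$\Rightarrow$(iv) via Lemma~\ref{lem:ssh}, and (iv)$\Rightarrow$(i) quoted from the Hamaker--Pawlowski--Sagan result (Theorem~\ref{ss}). Your write-up is in fact slightly more explicit than the paper's at the hinge step, spelling out why $i$-descent consistency collapses the union $\bigcup_i D_{J_i}^{-1}$ to a single $D_J^{-1}$.
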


\begin{proof}
Our definition of $\Pi$ implies that $\Pi$ is $i$-descent consistent. The implication (i)  implies (ii)) follows directly from Theorem~\ref{thm:swpclosed}. The fact that (ii) implies (iii)  follows from the fact that $\Pi$ is $i$-descent consistent as well as 
Lemmas~\ref{lem:ssh} and \ref{lem:swpclosed}.  The implication (iii) implies (iv) also follows from Lemma~\ref{lem:ssh}.   Lastly, the fact that (iv) implies (i) is given a straightforward explanation in the paper of~\cite{hps:paq}. 
\end{proof}

We now characterize pattern-Knuth closed classes that are unions of two Knuth classes, answering a question in~\cite{hps:paq}.
We say $S\neq T\in \SYT(n)$ are an \emph{$i$-descent-complete} pair if $K(S)\cup K(T) = D_J^{-1}$ for some $J\sbe[n-1]$. We start by characterizing such pairs.

\begin{lem}\label{lem:doubles}
Suppose $S\neq T\in \SYT(n)$.  Then $S$ and $T$ are an $i$-descent-complete pair if and only if

\beq\label{eq:pair1}
\ytableausetup{boxsize=1.5em}
S = \raisebox{7mm}{\begin{ytableau}
\scriptstyle 1&\scriptstyle 2& \scriptstyle \cdots & \scriptstyle k&\scriptstyle n\\
\scriptstyle k{+}1\\
\scriptstyle \vdots\\
\scriptstyle n{-}1
\end{ytableau}}\quad \qquad
T = \raisebox{7mm}{\begin{ytableau}
\scriptstyle 1&\scriptstyle 2& \scriptstyle \cdots & \scriptstyle k\\
\scriptstyle k{+}1& \scriptstyle n\\
\scriptstyle \vdots\\
\scriptstyle n{-}1
\end{ytableau}} 
\eeq
where $2\leq k\leq n-2$ and $\Des(S) = \Des(T) = [k,n-2]$, or
\beq\label{eq:pair2}
S = \raisebox{7mm}{\begin{ytableau}
\scriptstyle 1&\scriptstyle 2& \scriptstyle k & \scriptstyle \cdots &\scriptstyle n\\
\scriptstyle 3\\
\scriptstyle \vdots\\
\scriptstyle k-1
\end{ytableau}}\qquad \qquad
T = \raisebox{7mm}{\begin{ytableau}
\scriptstyle 1&\scriptstyle 2& \scriptstyle k+1 & \scriptstyle \cdots & \scriptstyle n\\
\scriptstyle 3 &\scriptstyle k\\
\scriptstyle \vdots\\
\scriptstyle k-1
\end{ytableau}} 
\eeq 
where $4\le k\le n$ and $\Des(S) = \Des(T) = [2,k-2]$, or $S$ and $T$ are transposes of these tableaux.  

\end{lem}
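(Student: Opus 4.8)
The first move is to reformulate the statement as a counting problem about descent sets of standard Young tableaux. Since distinct Knuth classes are disjoint and $D_J^{-1}=\bigcup_{\Des U=J}K(U)$, the equality $K(S)\cup K(T)=D_J^{-1}$ forces $\Des S=\Des T=J$ (otherwise $S$ and $T$ would lie in sets $D^{-1}$ for different descent sets) and, given that, it is equivalent to the assertion that $S$ and $T$ are the \emph{only} two elements of $\SYT(n)$ with descent set $J$. Writing $\beta_n(J)=\#\{U\in\SYT(n)\mid\Des U=J\}$, the lemma is therefore equivalent to: $\beta_n(J)=2$ if and only if $J$ or its complement $[n-1]\setminus J$ equals $[a,n-2]$ for some $a\ge2$ or $[2,b]$ for some $b\le n-2$; and in that case the two tableaux are those displayed in \eqref{eq:pair1}, \eqref{eq:pair2}, or their transposes. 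So the whole proof is a classification of descent sets carrying exactly two tableaux, together with an explicit identification of those tableaux.

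Next I would exploit two symmetries to cut down the cases. Transposition $U\mapsto U^t$ is an involution on $\SYT(n)$ with $\Des(U^t)=[n-1]\setminus\Des U$, so $\beta_n(J)=\beta_n([n-1]\setminus J)$ and the two tableaux for $J$ transpose to the two tableaux for its complement; this is exactly what produces the ``or transposes of these tableaux'' clause. Since transposition simultaneously toggles membership of $1$ and of $n-1$, and the pairs \eqref{eq:pair1}, \eqref{eq:pair2} both satisfy $1,n-1\notin\Des S$ while their transposes satisfy $1,n-1\in\Des S$, I may assume throughout that $n-1\notin J$. (A reversal symmetry $\Des U\mapsto\{n-i\mid i\in\Des U\}$, realized by Sch\"utzenberger evacuation, swaps the roles of $1$ and $n-1$ and will let me treat the mixed case below by a symmetric argument.)

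The computational heart is a forced-placement analysis for \emph{interval} descent sets. Fix $J=[a,b]$ with $2\le a\le b\le n-2$. Because $1,\dots,a-1\notin J$, the entries $1,\dots,a$ must occupy the first $a$ cells of row $1$; because $a,\dots,b\in J$, the ``drop'' $a+1,\dots,b+1$ is forced straight down the first column into cells $(2,1),\dots,(b-a+2,1)$, since at each step the only strictly lower addable cell is the next one in column $1$; and because $b+1,\dots,n-1\notin J$, the final run $b+2,\dots,n$ must be appended as a strip going up and to the right. Its first cell is either $(1,a+1)$ or $(2,2)$. Starting at $(1,a+1)$ forces the entire run into row $1$, giving one (hook) completion; starting at $(2,2)$ also gives a unique completion when the run has length $n-b-1=1$ (i.e.\ $b=n-2$) or when $a=2$ (so that $(2,3)$ is not addable and the run is forced rightward), but it branches into at least two completions as soon as $a\ge3$ and $b\le n-3$, since then the second cell may be either $(1,a+1)$ or $(2,3)$. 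Hence $\beta_n([a,b])=2$ exactly when $a=2$ or $b=n-2$, and $\beta_n([a,b])\ge3$ otherwise. Reading off the two completions identifies them with \eqref{eq:pair1} when $b=n-2$ (take $k=a$) and with \eqref{eq:pair2} when $a=2$ (take $k=b+2$), the overlap $J=[2,n-2]$ yielding the same pair from both descriptions.

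It remains to prove the converse, that any $J$ with $n-1\notin J$ and $\beta_n(J)=2$ is an interval of these two types, and here I would split on whether $1\in J$. If $1\in J$, the symmetric analysis (now $1,2,\dots$ start a drop down the first column) shows $\beta_n(J)=1$ precisely for $J=[1,c]$ and $\beta_n(J)\ge3$ otherwise, so the value $2$ never occurs. If $1\notin J$ and $J$ is not an interval, then $J$ has a gap producing a second drop, and the freedom created between consecutive descent-runs should again force $\beta_n(J)\ge3$. I expect \textbf{this last step---a uniform construction of a third standard tableau for every non-interval (and every strictly interior interval) $J$---to be the main obstacle}, because the local modifications must be arranged so as to preserve the entire descent set $J$ while changing the shape or the filling. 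The cleanest route is likely to first observe that for an interval $J$ each attainable shape carries \emph{at most one} tableau with descent set $J$ (the drop is forced and the increasing run is determined by its shape), which reduces $\beta_n(J)$ to the number of attainable shapes; one then shows directly that a gap in $J$, or an interior interval, always admits at least three attainable shapes, while any shape carrying several such tableaux already supplies three tableaux. Establishing this lower bound cleanly and in full generality is the crux, with everything else following from the forced-placement bookkeeping above.
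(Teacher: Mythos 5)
Your reduction of the lemma to the statement ``$\beta_n(J)=2$ exactly for the listed $J$, with the two tableaux as displayed'' is correct, and your forced-placement analysis of interval descent sets $J=[a,b]$ is sound: the count of completions and the identification of the two tableaux with the pairs in \eqref{eq:pair1} and \eqref{eq:pair2} all check out. But the proposal has a genuine gap, which you yourself flag: you never prove that a non-interval $J$ (nor a $J$ containing $1$ other than an initial segment) has at least three standard tableaux with that descent set. Without this lower bound the forward direction of the lemma is simply not established; what you have rigorously shown is consistent with some scattered $J$ also admitting exactly two tableaux. Since the forward direction \emph{is} the classification, the missing step is not a loose end but the main content of the proof.

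The paper closes exactly this gap with two ideas absent from your outline. First, $\Des$ restricts to a bijection from hook-shaped tableaux in $\SYT(n)$ to subsets of $[n-1]$; since $K(S)\cup K(T)=D_J^{-1}$ accounts for \emph{all} permutations with $i$-descent set $J$, the unique hook $H$ with $\Des H=J$ must equal $S$ or $T$. This immediately replaces your abstract counting problem by an analysis of one explicit tableau. Second, local moves on that hook manufacture additional tableaux with the same descent set: if the first row of $H$ has adjacent entries $2\le a<b$ with $b>a+1$, moving $b$ into cell $(2,2)$ gives a new standard tableau with descent set $J$, and likewise for adjacent entries $2\le c<d$ with $d>c+1$ in the first column; when moreover $a\ge3$ and $b\le n-1$, a further move of $b+1$ into $(2,3)$ gives yet another. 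Having only two tableaux with descent set $J$ then forces the hook's first row to have exactly one gap and its first column none, which is precisely your interval structure, and the split $a=2$ versus $a\ge3$ (the latter forcing $b=n$) produces the two displayed pairs. Note that this second device, applied to the hook $H_J$ (which exists for every $J$ once you have arranged $1\notin J$), is the uniform construction you were looking for: if $J$ has $m\ge2$ maximal blocks, then $H_J$ has at least $m$ row gaps and $m-1$ column gaps away from the corner, all yielding distinct tableaux, so $\beta_n(J)\ge 2m\ge4$. Your skeleton can therefore be completed, but the key mechanism needed to do so is the paper's hook bijection plus $(2,2)$-moves, not anything in your proposal.
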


\begin{proof}
The reverse direction follows from a straightforward check.  To prove the forward direction let $J\sbe[n-1]$ be such that $K(S)\cup K(T) = D_J^{-1}$.  As $S\neq T$ it follows from Lemma~\ref{lem:ssh} that neither $S$ nor $T$ can be superstandard.  Furthermore, by considering reverses if necessary we may assume $1\not\in J$  so that $2$ is in the first row of both $S$ and $T$.  It  now suffices to show that no other pairs of tableaux other than those  displayed in (\ref{eq:pair1}) or (\ref{eq:pair2}) are $i$-descent-complete pairs.  

To this end observe that the mapping between hook tableaux and subsets of $[n-1]$ given by $\Des$ is a bijection.  As $K(S)\cup K(T)$ is the set of all permutations with $i$-descent set $J$ it follows, that either $S$ or $T$ is hook shape.  We take $S$ to be of hook shape. In particular as $S$ is not superstandard its leftmost column has length at least 2.      Now consider adjacent elements $a$ and $b$ with $2\leq a<b$ in the top row of $S$.  If $b>a+1$ then $b-1$ must be in the first column of $S$ so that the tableau obtained by moving 
$b$ into position $(2,2)$ is standard and has descent set $J$.  Likewise consider adjacent elements $c$ and $d$ with $2\leq c<d$ in the first column of $S$.  If $d>c+1$ then the tableau obtained by moving $d$ into position $(2,2)$ is standard and also has descent set $J$.  As there are exactly two tableaux with descent set $J$ we must have
$$S = \raisebox{7mm}{\begin{ytableau}
\scriptstyle 1&\scriptstyle 2& \scriptstyle \cdots &\scriptstyle a& \scriptstyle b&\scriptstyle\cdots &\scriptstyle n\\
\scriptstyle a+1\\
\scriptstyle a+2\\
\scriptstyle \vdots\\
\scriptstyle b{-}1
\end{ytableau}},$$
for some $a\ge2$ and $a+1< b\leq n$.  
If $a=2$, then this results in the second pair in the statement of the theorem.  If $a\ge3$ then
we claim that  $b=n$. For if $b\le n-1$ then we have at least two additional tableaux with descent set $J$.  Namely, we have the tableau $S'$ obtained by moving $b$ into position $(2,2)$ and the tableau $S''$ obtained from $S'$ by moving $b+1$ into positions $(2,3)$ where the fact that $a\ge3$ guarantees that $S''$ is standard.  When $b=n$ we get the first pair of tableaux in the statement of the theorem. This completes our proof. 
\end{proof}

We are now ready to state our second main theorem of this section.

\begin{thm}\label{thm:pairs}
	Suppose  $\Pi =  K(S)\cup K(T)$  where $S\neq  T\in\SYT(n)$. The following are equivalent:
\begin{enumerate}[label = \roman*)]
	\item[(i)] $\Pi$ is pattern-Knuth closed,
	\item[(ii)] $\Pi$ is swap closed,
	\item[(iii)] $\Pi = D_J^{-1}\cup D_L^{-1}$ where either $J\neq L$ are of the form given in Lemma~\ref{lem:ssh}, or $J=L$ is of the form given in Lemma~\ref{lem:doubles},
	\item[(iv)] $S$ and $T$ are either distinct superstandard hooks, the tableaux pairs displayed in Lemma~\ref{lem:doubles}, or their transposes.
\end{enumerate}
\end{thm}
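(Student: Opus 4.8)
The plan is to establish the cycle of implications (i) $\Rightarrow$ (ii) $\Rightarrow$ (iii) $\Rightarrow$ (iv) $\Rightarrow$ (i), leaning on the single-class Theorem~\ref{thm:single} and the structural Lemmas~\ref{lem:swpclosed}, \ref{lem:ssh}, and \ref{lem:doubles} for every step except the first. For (ii) $\Rightarrow$ (iii), Lemma~\ref{lem:swpclosed} writes a swap-closed $\Pi$ as $\bigcup_i D_{J_i}^{-1}$; since each $D_{J_i}^{-1}$ is the union of the Knuth classes $K(U)$ over all $U\in\SYT(n)$ with $\Des(U)=J_i$, and $\Pi=K(S)\cup K(T)$ consists of exactly two Knuth classes, there are only two possibilities. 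Either a single $J_i=J$ is the descent set of precisely the two tableaux $S,T$, which is the $J=L$ case of Lemma~\ref{lem:doubles}, or two distinct sets $J\ne L$ each occur as the descent set of a unique tableau, which by Lemma~\ref{lem:ssh} forces $S,T$ to be superstandard hooks. Re-reading those two lemmas gives (iii) $\Rightarrow$ (iv).

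For (iv) $\Rightarrow$ (i) I would separate the two families. When $S,T$ are distinct superstandard hooks, each of $K(S)$ and $K(T)$ is pattern-Knuth closed by Theorem~\ref{thm:single}, so Proposition~\ref{union} closes the union. When $(S,T)$ is one of the pairs of Lemma~\ref{lem:doubles} we instead have $\Pi=D_J^{-1}$, and I would show directly that whether $\sigma$ lies in $\fSb_m(\Pi)$ is determined by the shape and descent set of $P(\sigma)$, equivalently by the Knuth invariants $\sh\sigma$ and $\iDes\sigma$, so that $\fSb_m(\Pi)$ is automatically a union of Knuth classes. This is the same shape-plus-descent bookkeeping used to prove Theorem~\ref{pkc shape}, now refined to keep track of the exact $i$-descent set rather than only the shape.

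The real content is (i) $\Rightarrow$ (ii), which I would prove by induction on $n$ in the spirit of Theorem~\ref{thm:swpclosed}. Deleting $n$ carries $K(S)\cup K(T)$ into $K(\widehat S)\cup K(\widehat T)$ because Knuth equivalence is preserved under removing the largest letter; since a pattern-Knuth closed set is a union of Knuth classes, $\widehat\Pi$ is again a union of at most two Knuth classes, so the inductive hypothesis will apply once $\widehat\Pi$ is known to be pattern-Knuth closed. To obtain the latter I would check the hypothesis of Lemma~\ref{pkcinduct}, that each $\pi\in\Pi$ has $\overdoublerightarrow\pi\in\Pi$ or $\overdoubleleftarrow\pi\in\Pi$, using Lemma~\ref{swpright} to slide $n$ to an end, and then bootstrap from swap-closure of $\widehat\Pi$ back to swap-closure of $\Pi$ by the $n^{\pm}$-insertion and Knuth-move manipulations of Cases~1 and 2 of Theorem~\ref{thm:swpclosed}, with Lemma~\ref{4points} dictating which deletion returns to $\Pi$.

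The main obstacle is that this entire apparatus in Theorem~\ref{thm:swpclosed} relied on $\Pi$ being $i$-descent consistent, which fails for a genuine union. I would therefore split on the two $i$-descent sets $J_S=\iDes K(S)$ and $J_T=\iDes K(T)$. If $J_S=J_T$ then $\Pi$ is $i$-descent consistent and Theorem~\ref{thm:swpclosed} applies unchanged, forcing $\Pi=D_J^{-1}$ and landing in the Lemma~\ref{lem:doubles} family. The hard case is $J_S\ne J_T$, where the reduction ``replace $\Pi$ by $\Pi^r$ to assume $n-1\notin\iDes(\Pi)$'' breaks down precisely when $n-1$ lies in exactly one of $J_S,J_T$, so Lemma~\ref{swpright} cannot be invoked for all of $\Pi$ simultaneously. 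The fact I would exploit to push through is that every subsequence of a permutation has a single insertion tableau and is therefore unambiguously a copy from $K(S)$ or from $K(T)$ but never both; tracking which class is responsible for each containment through the insertion and deletion steps should let me show that each of $K(S)$ and $K(T)$ is individually pattern-Knuth closed, whence Theorem~\ref{thm:single} forces both $S$ and $T$ to be superstandard hooks and (ii) follows. I expect this disentangling of the two classes when their $i$-descent sets differ to be the most delicate part of the argument.
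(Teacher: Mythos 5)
Your implications (ii)~$\Rightarrow$~(iii)~$\Rightarrow$~(iv) match the paper, and your (iv)~$\Rightarrow$~(i) is fine for distinct superstandard hooks (Theorem~\ref{thm:single} plus Proposition~\ref{union}); for the Lemma~\ref{lem:doubles} pairs it would be cleaner to note $\Pi=D_J^{-1}$ and cite the fact, used by the paper, that sets of the form $D_J^{-1}$ are pattern-Knuth closed (Lemma~5.7 of \cite{hps:paq}), rather than your unproven claim that containment of a pattern from $D_J^{-1}$ is determined by $\sh\sigma$ and $\iDes\sigma$. The genuine gap is in (i)~$\Rightarrow$~(ii), precisely in the case you yourself flag as delicate: $\iDes K(S)\neq \iDes K(T)$. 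Your plan is to ``track which class is responsible for each containment'' and conclude that $K(S)$ and $K(T)$ are each \emph{individually} pattern-Knuth closed. But that conclusion is essentially equivalent to the theorem in this case (once each class is individually closed, Theorem~\ref{thm:single} immediately makes $S$ and $T$ superstandard hooks), and the tracking idea does not deliver it: although a fixed subsequence standardizes into at most one of $K(S)$, $K(T)$, a single permutation can contain copies from both classes through different subsequences, and a Knuth move on $\sigma$ can, a priori, destroy every copy of an element of $K(S)$ while creating a copy of an element of $K(T)$. Pattern-Knuth closure of the union only guarantees a witness in the union after each move, with no control over which class supplies it, and nothing in your sketch rules out the witness switching classes. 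So the ``disentangling'' is not an argument but a restatement of what must be proved.

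The paper closes this case with machinery your proposal never assembles. After reducing by reversal \emph{and} complementation to the situation $1,n-1\in\Des(S)\triangle\Des(T)$, it proves Lemma~\ref{lem:1n}, which produces $\pi\in K(S)$ with $\pi_n=n$ and $\sigma\in K(T)$ with $\sigma_1=n$; Lemma~\ref{lem:RS and ds} (applied to $K(S)$ and to $K(T)^r$) then yields $\overdoublerightarrow{\zeta}\in K(S)$ for all $\zeta\in K(S)$ and $\overdoubleleftarrow{\xi}\in K(T)$ for all $\xi\in K(T)$. Note that the hypothesis of Lemma~\ref{pkcinduct} was written precisely so that the direction of the move may vary with the element; this per-element flexibility is exactly what rescues the induction when $n-1$ lies in only one of the two descent sets, the obstruction you correctly identified but did not overcome. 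Even then the induction is not finished: knowing $\widehat S,\widehat T$ are superstandard hooks and $S,T$ are hooks does not by itself make $S,T$ superstandard, and the paper needs a further dedicated case-analysis lemma (Lemma~\ref{PairsHooks}) to make that promotion. Your sketch contains no analogue of Lemma~\ref{lem:1n} or of Lemma~\ref{PairsHooks}, so the passage from the inductive hypothesis on $\widehat\Pi$ back to the conclusion for $\Pi$ is missing.
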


In light of Theorems~\ref{thm:single} and \ref{thm:pairs} one would hope that pattern-Knuth closure would, in general, be equivalent to swap closure.  Unfortunately, this is not true.  For example take $\Pi = K(T_1)\cup K(T_2) \cup K(T_3)$ where 
$$\ytableausetup{boxsize=1.3em}
T_1= \begin{ytableau}
\scriptstyle 1&\scriptstyle 2& \scriptstyle 4\\
\scriptstyle 3\\
\end{ytableau}\quad 
T_2 = \begin{ytableau}
\scriptstyle 1&\scriptstyle 3& \scriptstyle 4\\
\scriptstyle 2\\
\end{ytableau}\quad 
T_3 = \begin{ytableau}
\scriptstyle 1&\scriptstyle 2& \scriptstyle 3\\
\scriptstyle 4\\
\end{ytableau}\ .$$
A computer check shows that $\fS_5(\Pi)$ is a union of Knuth classes and so, by Lemma~\ref{pkc:M} below, we know that this $\Pi$ is pattern-Knuth closed.    On the other hand this $\Pi$ is not swap closed as $3124\in K(T_1)$ but performing a swap gives 
$3142\in K(T_4)$ where 
$$T_4 = \ytableausetup{boxsize=1.3em}\begin{ytableau}
\scriptstyle 1&\scriptstyle 2\\
\scriptstyle 3& \scriptstyle 4\\
\end{ytableau}\ . $$

We now turn to the proof Theorem~\ref{thm:pairs}. The crux of this proof is in demonstrating that i) implies ii).  We begin by building up the required lemmas to show this implication.    In what follows we denote Knuth equivalence by $\sim$.

\begin{lem}\label{lem:RS and ds}
Fix $S\in \SYT(n)$.  The following are equivalent:
\begin{enumerate}[label=\roman*)]
	\item[(i)] there exists some $\pi\in K(S)$ with $\pi_n = n$,
	\item[(ii)] $K(\widehat S) = \widehat{K(S)}$ and $n$ is in the top row of $S$,
	\item[(iii)] $\overdoublerightarrow{\sigma} \in K(S)$ for all $\sigma\in K(S)$.
\end{enumerate}
\end{lem}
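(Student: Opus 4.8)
The plan is to prove the three conditions equivalent by a short implication cycle (i)$\Rightarrow$(ii)$\Rightarrow$(iii)$\Rightarrow$(i), with everything governed by the single geometric condition that $n$ occupies the last cell of the top row of $S$ (note that since $n$ is the maximal entry, being in the top row forces it to be the rightmost entry there). The engine of the whole argument will be two compatibility facts relating Knuth equivalence $\sim$ to the operations of deleting and appending the largest letter, both obtained by inspecting the Knuth moves of Theorem~\ref{Kequiv}. \textbf{Fact A:} if $\pi\sim\pi'$ in $\fS_n$ then $\widehat\pi\sim\widehat{\pi'}$ in $\fS_{n-1}$. Indeed, whenever a single Knuth move involves $n$, that letter must be the largest of the three entries, so deleting it turns $a\,n\,b\leftrightarrow n\,a\,b$ and $b\,a\,n\leftrightarrow b\,n\,a$ into the trivial identities $ab=ab$ and $ba=ba$, while a move not involving $n$ survives deletion as a move on the same triple; chaining and Theorem~\ref{Kequiv} give the claim. \textbf{Fact B:} if $\pi\sim\pi'$ in $\fS_{n-1}$ then $\pi\cdot n\sim\pi'\cdot n$ (concatenation), since a Knuth move among the first $n-1$ positions is untouched by the appended letter.

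Next I would record two elementary insertion observations, both flowing from the fact that inserting the maximal letter $n$ simply appends it to the end of the top row (cf.\ Theorem~\ref{RS}): (a) if $\pi_n=n$ then $P(\widehat\pi)=\widehat S$ and $n$ sits at the end of the top row of $S$; and (b) for any $\tau\in\fS_{n-1}$ the tableau $P(\tau\cdot n)$ is $P(\tau)$ with $n$ appended to its top row. Combining (a) with Fact A already shows that $P(\widehat\sigma)=\widehat S$ for \emph{every} $\sigma\in K(S)$, so that $\widehat{K(S)}\sbe K(\widehat S)$ holds unconditionally; and combining (b) with Fact A shows that $P(\overdoublerightarrow{\sigma})=P(\widehat\sigma\cdot n)$ equals $\widehat S$ with $n$ appended to its top row, a tableau independent of the representative $\sigma\in K(S)$. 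This independence is the conceptual heart of the lemma.

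With these in hand the cycle is quick. For (i)$\Rightarrow$(ii): given $\pi\in K(S)$ with $\pi_n=n$, observation (a) puts $n$ in the top row and gives $\widehat\pi\in K(\widehat S)$; Fact A yields $\widehat{K(S)}\sbe K(\widehat S)$, and for the reverse inclusion any $\tau\in K(\widehat S)$ satisfies $\tau\sim\widehat\pi$, so $\tau\cdot n\sim\pi$ by Fact B, whence $\tau\cdot n\in K(S)$ and $\tau=\widehat{\tau\cdot n}\in\widehat{K(S)}$, proving $K(\widehat S)=\widehat{K(S)}$. For (ii)$\Rightarrow$(iii): given $\sigma\in K(S)$, we have $\widehat\sigma\in\widehat{K(S)}=K(\widehat S)$, so by (b) the tableau $P(\overdoublerightarrow{\sigma})$ is $\widehat S$ with $n$ appended to the top row, which equals $S$ exactly because $n$ lies in the top row of $S$; hence $\overdoublerightarrow{\sigma}\in K(S)$. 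Finally (iii)$\Rightarrow$(i) is immediate: choose any $\sigma\in K(S)$ and take $\pi=\overdoublerightarrow{\sigma}$, which lies in $K(S)$ and ends in $n$.

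The only real content is Fact A — that deleting the maximal letter descends to Knuth classes — so the care must go into the small case analysis of the Knuth moves there; the remaining steps are bookkeeping with the ``append $n$ to the top row'' tableau together with the observation that $P(\widehat\sigma)$ and $P(\overdoublerightarrow{\sigma})$ do not depend on the chosen $\sigma\in K(S)$.
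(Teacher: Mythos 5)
Your proposal is correct and follows essentially the same route as the paper: the identical implication cycle (i)$\Rightarrow$(ii)$\Rightarrow$(iii)$\Rightarrow$(i), resting on the same two pillars — compatibility of Knuth equivalence with deleting/appending the maximal letter $n$, and the observation that inserting $n$ appends it to the end of the top row — with the only cosmetic difference being that you prove the deletion fact (your Fact A) by case analysis of the Knuth moves of Theorem~\ref{Kequiv}, whereas the paper obtains the stronger unconditional identity $P(\widehat\sigma)=\widehat{P(\sigma)}$ directly from the insertion algorithm. One small expository caveat: your claim that $\widehat{K(S)}\subseteq K(\widehat S)$ holds ``unconditionally'' is not what your combination of (a) with Fact A actually establishes (it requires a representative of $K(S)$ ending in $n$), but this is harmless, since each implication in your cycle invokes that inclusion only where its hypothesis supplies it — from (i) directly, or from the equality assumed in (ii).
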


\begin{proof}
Before proving any of the above implications observe that if we apply the Robinson-Schensted algorithm to any $\sigma\in\fS_n$ and keep track of only the values $<n$ it is clear that $P(\widehat\sigma)=\widehat{P(\sigma)}$. Thus in general $\widehat{K(S)} \sbe K(\widehat S)$.

We now prove (i) implies (ii).  By the existence of $\pi\in K(S)$ with $\pi_n=n$ it is clear that $S=P(\pi)$ has $n$ in the top row.  From the observation in the first paragraph, to establish the equality  we need only show that $K(\widehat S) \sbe \widehat{K(S)}$ and we know $P(\widehat\pi) = \widehat{P(\pi)} = \widehat{S}$.  Now pick $\sigma\in K(\widehat S)$ so that $\sigma\sim \widehat\pi$ and hence $\sigma,n\sim \pi$.  Consequently,  $P(\sigma,n) = P(\pi) = S$.  Therefore $\sigma\in \widehat{K(S)}$ as needed.  

We now prove that (ii) implies (iii).  By (ii) we see that for any $\sigma\in K(S)$ we have $\widehat\sigma\in \widehat{K(S)}= K(\widehat{S})$.  So $P(\overdoublerightarrow{\sigma})$ is obtained by adding $n$ to the end of the top row of $\widehat{S}$.  As $n$ is in the top row of $S$ it follows that $P(\overdoublerightarrow{\sigma}) = S$.  

The fact that (iii) implies (i) is clear.  
\end{proof}

\begin{lem}\label{PairsHooks}
	Fix $n\geq 1$.  Let $S,T\in \SYT(n)$ be of hook shape with the property that $\widehat S$ and $\widehat T$ are superstandard hooks. If $\Pi = K(S)\cup K(T)$ is pattern-Knuth closed then $S$ and $T$ are both superstandard.  
\end{lem}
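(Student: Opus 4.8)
The plan is to argue by contradiction: assuming that one of $S,T$ fails to be superstandard, I would produce a permutation whose Knuth class escapes $\Pi$, contradicting pattern-Knuth closure. First I would record the possible liftings. Since $S$ is a hook and $\widehat S$ a superstandard hook, $S$ arises from $\widehat S$ by inserting $n$ into one of the two inner corners, and a short check shows $S$ is \emph{not} superstandard exactly when $n$ is appended to the end of the first row of a row-superstandard $\widehat S$ of arm $a\ge 3$ and leg $b\ge 1$ (so that $S$ has first row $1,2,\dots,a{-}1,n$ over the leg $a,a{+}1,\dots,n{-}1$), or when $S$ is the transpose of such a tableau. Because reversal sends Knuth classes to Knuth classes and preserves pattern-Knuth closure, and sends the first configuration to its transpose, I may assume throughout that $S$ has the first (row-type) form; the transpose case will follow at the end by applying the argument to $\Pi^r$.

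Next I would locate the escaping class. Let $\sigma=(n{-}1)(n{-}2)\cdots a\,1\,2\cdots(a{-}1)\,n$ be the row word of $S$, so that $\sigma\in K(S)\subseteq\Pi$; its last two entries $a{-}1$ and $n$ differ by $b+1\ge 2$, so the left-swap $\overleftarrow{\sigma}$ is defined and genuine. A direct insertion computation gives $\sh(\overleftarrow{\sigma})=(a{-}1,2,1^{b-1})$, which is \emph{not} a hook, since its $(2,2)$ cell is occupied ($a{-}1\ge 2$). As both $S$ and $T$ are hooks, this forces $\overleftarrow{\sigma}\notin K(S)\cup K(T)=\Pi$. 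Hence it suffices to prove that pattern-Knuth closure \emph{forces} $\overleftarrow{\sigma}\in\Pi$.

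To force $\overleftarrow{\sigma}\in\Pi$ I would split on whether $\Pi$ is $i$-descent consistent, using that $\Des S=[a{-}1,n{-}2]$ is an interval meeting neither end of $[n{-}1]$ (as $a\ge 3$). If $\Des T=\Des S$, then $\Pi$ is $i$-descent consistent, so by Theorem~\ref{thm:swpclosed} we have $\Pi=D_J^{-1}$ with $J=[a{-}1,n{-}2]$; thus $\{S,T\}$ is an $i$-descent-complete pair, and Lemma~\ref{lem:doubles} forces $\{S,T\}$ into one of its listed forms, every one of which contains a member that is not a hook or whose deletion is not superstandard, contradicting the hypotheses. If instead $\Des T\neq\Des S$, I would realize the swap $\sigma\mapsto\overleftarrow{\sigma}$ inside $\fS_{n+1}$ by the decorated-element technique of Theorem~\ref{thm:swpclosed} and Lemma~\ref{swpright}: insert a symbol $(n{-}1)^-$ or $n^-$ adjacent to $n$, use a Knuth move to interchange $n$ with its left neighbour to obtain some $\rho\in\fSb_{n+1}(\Pi)$, and then invoke pattern-Knuth closure to delete a single entry and return to length $n$.

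The main obstacle is precisely this final deletion. Without global $i$-descent consistency the deleted entry is not determined a priori; but since $\Pi$ meets only the two $i$-descent sets $\Des S$ and $\Des T$, any admissible deletion $\rho-x$ must satisfy $\iDes(\rho-x)\in\{\Des S,\Des T\}$, and Lemma~\ref{4points}, applied to the four consecutive values surrounding the swapped pair, should eliminate every choice of $x$ except the one returning $\overleftarrow{\sigma}$. This is exactly the step that fails for a union of three or more Knuth classes, where a third $i$-descent set supplies an alternative deletion and the swapped permutation may legitimately escape; so the two-class hypothesis is used in an essential way. The technical heart is carrying out this deletion analysis cleanly while tracking the two competing $i$-descent sets, and here one must further distinguish the position of $n$ in $T$ (end of first row versus foot of first column), since this governs whether $n{-}1\in\iDes(\Pi)$ and hence which preliminary swaps from Lemma~\ref{swpright} are available.
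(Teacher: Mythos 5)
Your setup is sound: the classification of the non-superstandard lifts, the computation $\sh(\overleftarrow{\sigma})=(a{-}1,2,1^{b-1})$, and the case $\Des S=\Des T$ (via Theorem~\ref{thm:swpclosed} and Lemma~\ref{lem:doubles}, with Lemma~\ref{lem:ssh} covering the possibility $S=T$) are all correct. The gap is in the case $\Des S\neq\Des T$, which is the heart of the lemma, and the mechanism you propose there provably fails rather than merely being unfinished. Take your $\sigma=(n{-}1)(n{-}2)\cdots a\,1\,2\cdots(a{-}1)\,n$, append $n^-$ next to $n$, and perform the Knuth move, obtaining $\rho=(n{-}1)\cdots a\,1\cdots(a{-}2)\,n\,(a{-}1)\,n^-\in\fSb_{n+1}(\Pi)$. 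Now delete the value $n$ from $\rho$: what remains standardizes to exactly $\sigma$, which lies in $K(S)\sbe\Pi$. So the deletion $x=n$ is admissible (right $i$-descent set, right insertion tableau), pattern-Knuth closure is satisfied without any swap being forced, and no contradiction can be extracted. Moreover, Lemma~\ref{4points} cannot rescue this: it requires the subsequence $m{-}3,m{-}1,m,m{-}2$, but in your $\rho$ the values $n{-}1,n{-}2$ sit at the far left in \emph{decreasing} order, reflecting the fact that $n{-}2\in\iDes(\sigma)=[a{-}1,n{-}2]$. The paper only ever applies Lemma~\ref{4points} when $n{-}2,n{-}1,n$ occur in increasing order (Case 1 of Theorem~\ref{thm:swpclosed}, under the hypothesis $n{-}2\notin\iDes(\Pi)$); your configuration is exactly the ``Case 2'' situation, where even with full $i$-descent consistency the paper needs a much longer argument --- and here consistency fails by assumption.

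The paper's proof avoids this trap by never trying to force a single swap. For each of three cases on the position of the first-row jump ($2<k<n-2$, $k=n-2$, $k=2$, where $k=a-1$ in your notation), it constructs by a longer chain of Knuth moves a permutation $\tau\in\fSb_{n+1}(\Pi)$ engineered so that \emph{every} deletion $\tau-x$ has insertion tableau that is either not a hook, or a hook $P$ with $\widehat P$ not superstandard; the hypotheses on $S$ and $T$ then exclude $\tau-x\in\Pi$ for all $x$, the desired contradiction. The working engine is exhaustive shape analysis of all deletions, not descent bookkeeping. (Incidentally, your construction with $(n{-}1)^-$ in place of $n^-$ can be pushed through in this shape-analytic way when $b\ge2$, but it still breaks when $b=1$: there, deleting $n$ yields the row superstandard hook of shape $(n{-}1,1)$, which may perfectly well equal $T$; this is precisely why the paper treats $k=n-2$ as a separate case with a different construction.) So the missing idea is the design criterion ``no deletion survives,'' verified by insertion computations case by case, together with the extra constructions needed for the boundary cases.
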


\begin{proof}
When $n\leq 3$ all tableaux are superstandard and so the result follows trivially.  When $n=4$ a computer check demonstrates the theorem.  Therefore we may assume $n\geq 5$.  	For a contradiction assume $T$ is not superstandard.  By considering $\Pi^r$  if necessary we may further assume $n$ is in the top row of $T$ and that the row word of $T$ is
	$$\rho(T) = n-1,\ldots, k+1, 1,2,\ldots, k, n$$
	where $2\leq k\leq n-2$.  We now consider three cases.

\medskip\noindent
\textbf{Case 1:} $2<k< n-2$

Add $(n-1)^-$ to $\rho(T)$ in position 1 to obtain
	$$(n-1)^-, n-1,\ldots, k+1, 1,2,\ldots, k, n\in \fSb_{n+1}(\Pi).$$
Via Knuth moves slide $n-1$ to position $n-1$ and then, via another Knuth move, interchange $n$ and $k$ to obtain
$$\tau: = (n-1)^-, n-2,\ldots, k+1,1,2,\ldots,k-1, n-1, n, k\in \fSb_{n+1}(\Pi).$$
There exists some $x$ such that $\tau-x\in \Pi$. Because $k<n-2$ the decreasing prefix $(n-1)^-, n-2, \ldots, k+1$ has length at least 2.  Now a straightforward check shows that if $x\neq k$ then $P(\tau-x)$ is not of hook shape since $k$ bumps either $n-1$ or $n$ into position $(2,2)$. When $x =k$ we see that $P= P(\tau-x)$ has hook shape with top row $1,2,\dots,k-1,n-1,n$.  The fact that 
$2< k<n-2$ means that $\widehat P$ is not superstandard and so $P \neq S,T$. This contradiction shows that the lemma holds in this case.

\medskip\noindent
\textbf{Case 2:} $k=n-2$

Here $\rho(T) = n-1, 1,2,\ldots,n-2, n\sim 1,2,\ldots,n-3,n-1,n, n-2$. Now add $(n-2)^+$ in position $n-3$ and use it to interchange $n-3$ and $n-1$ to obtain
$$
\barr{l}
1,2,\ldots,n-4, (n-2)^+, n-1,n-3,n, n-2 \\[5pt]
\qquad\sim 1,2,\ldots,n-4, (n-2)^+, n-1,n,n-3, n-2\in \fSb_{n+1}(\Pi).	
\earr
$$
Denoting the last permutation by $\tau$, there exists $x$ be such that $\tau -x\in \Pi$.  
If $x\neq n-2,n-3$ then the shape of $P=P(\tau-x)$ is $(n-2,2)$ which is not a hook.
If $x=n-2$ or $n-3$ then 
$$
\ytableausetup{boxsize=1.6em, }
P=\begin{ytableau}
\scriptstyle 1 & \scriptstyle 2 & \scriptstyle \ldots &  \scriptstyle n{-}4 & \scriptstyle n{-}3 & \scriptstyle n{-}1 & \scriptstyle n  \\
\scriptstyle n{-}2
 \end{ytableau}\ .
$$
Since $n\geq 5$ we see that the top row contains 1 and 2 and $n-1\geq 4$ but not $n-2$.  So $\widehat P$ is not superstandard and hence $P\neq S,T$.  We conclude, in this case, that the lemma holds.  

\medskip\noindent
\textbf{Case 3:} $k=2$

Here $\rho(T) = n-1,\ldots,3, 1,2,n$.  Now add $(n-2)^+$ in position $n-1$ and use it to interchange $2$ and $n$ to obtain 
$$
\barr{l}
n-1,\ldots,3, 1,(n-2)^+,n,2 \\[5pt]
\qquad\sim n-2,n-1,n,1,(n-2)^+,n-3,\ldots,3,2\in \fSb_{n+1}(\Pi).
\earr
$$
Denoting the last permutation by $\tau$ there must exist some $x$ so that $\tau-x\in \Pi$.   Note that the first $6$ terms of $\tau$ are order isomorphic to $356142$ whose insertion tableau is not of hook shape.  So $x$ must be one of the first $6$ terms.  If $x\neq 1$ then the remaining $5$ elements in $\tau-x$ insert to a tableau which is not of hook shape.  Therefore $x=1$ and 
$\tau -1 = n-3,n-1,n,n-2,n-4,\ldots, 1$.  It is now easy to check that if  $P=P(\tau-1)$  then $\widehat P$ is not superstandard and consequently $P\neq S,T$.  This completes the final case and the proof of the lemma.
\end{proof}

In what follows we denote the symmetric difference of two sets by $\triangle$.  

\begin{lem}\label{lem:1n}
Suppose $n\geq 4$.  Let $\Pi = K(S)\cup K(T)$ be pattern-Knuth closed where $S,T\in \SYT(n)$.  Assume $n-1\notin \Des(S)$ and $n-1\in \Des(T)$ and $1\in \Des(S)\triangle \Des(T)$.  Then there exists some $\pi\in K(S)$ and $\sigma\in K(T)$ with
$\pi_n = n$ and $\sigma_1 = n$.  
\end{lem}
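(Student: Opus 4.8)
The plan is to reduce the two existence assertions to a single one, and then to produce the required permutation by sliding $n$ to an end of the one‑line notation while staying inside $\Pi$. Recall first that $\iDes$ is constant on Knuth classes, equal to $\Des(S)$ on $K(S)$ and to $\Des(T)$ on $K(T)$. I claim it suffices to prove that $\overdoublerightarrow{\sigma}\in\Pi$ for some $\sigma\in K(S)$. Indeed, $\overdoublerightarrow{\sigma}$ ends in $n$, so $n$ lies to the right of $n-1$ and hence $n-1\notin\iDes(\overdoublerightarrow{\sigma})$; since $n-1\in\Des(T)$ this excludes $K(T)$ and forces $\overdoublerightarrow{\sigma}\in K(S)$, giving a $\pi\in K(S)$ with $\pi_n=n$. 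The second half of the lemma is the same statement applied to the reversed set $\Pi^r=K(S^t)\cup K(T^t)$: reversal preserves pattern‑Knuth closure, and by Theorem~\ref{RS}(c) it transposes insertion tableaux, so $\Des(S^t)=[n-1]\setminus\Des(S)$ and likewise for $T$. One checks that the three hypotheses hold for $\Pi^r$ with the two tableaux interchanged, so the statement for $\Pi^r$ yields $\pi'\in K(T^t)$ with $\pi'_n=n$, and then $(\pi')^r\in K(T)$ begins with $n$.

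To produce $\overdoublerightarrow{\sigma}\in\Pi$, I would fix $\sigma\in K(S)$ and push $n$ rightward one position at a time, showing each intermediate permutation stays in $\Pi$. Throughout, $n-1$ remains to the left of $n$ (we never move $n-1$, and $n$ only moves right), so $n-1\notin\iDes$ and every intermediate lying in $\Pi$ in fact lies in $K(S)$; in particular the letter immediately to the right of $n$ is always at most $n-2$, so the swaps are legal. For one push I would follow the proof of Lemma~\ref{swpright}: with $\sigma_i=n$ and $\sigma_{i+1}=y\le n-2$, add $n^-$ in position $i+2$ and apply one Knuth move to reach $\rho=\dots,y,n,n^-,\dots$, which is Knuth‑equivalent to the copy of $\sigma$ carrying the extra letter $n^-$; hence $\rho\in\fSb_{n+1}(\Pi)$ by pattern‑Knuth closure, so $\rho-x\in\Pi$ for some $x$, and $\rho-n=\rho-n^-=\overrightarrow{\sigma}$.

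It remains to confine the deleted letter $x$ to $\{n,n^-\}$. As in the proof of Lemma~\ref{swpright}, if $x\neq n,n^-$ then the two largest letters of $\rho$ stay adjacent in the order (largest, second largest), so $n-1\in\iDes(\rho-x)$ and thus $\rho-x\notin K(S)$. The new ingredient, needed because $\Pi$ is no longer $i$‑descent consistent, is to also exclude $K(T)$: when $x\notin\{1,2\}$ the construction has not disturbed the relative order of $1$ and $2$, so whether $1\in\iDes(\rho-x)$ agrees with whether $1\in\Des(S)$, and since $1\in\Des(S)\triangle\Des(T)$ this is the opposite of what membership in $K(T)$ demands. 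Hence $\rho-x\notin\Pi$ for $x\notin\{1,2,n,n^-\}$, and once $x\in\{n,n^-\}$ we get $\overrightarrow{\sigma}\in\Pi$, hence in $K(S)$; iterating slides $n$ to the last slot and yields $\overdoublerightarrow{\sigma}\in K(S)$.

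The main obstacle is precisely the leftover possibility $x\in\{1,2\}$, where deleting a small letter relabels $1$ and $2$ and breaks the clean position‑$1$ comparison used above. I expect to dispose of it by a short direct analysis of how the labels shift under that deletion, again invoking $1\in\Des(S)\triangle\Des(T)$ for the relabelled word together with the standing hypothesis $n\ge4$, which keeps the small letters $1,2$ separated from the block $\{n-1,n,n^-\}$ and leaves a third small letter available to detect the discrepancy with $\Des(T)$. Making this small‑letter case fully rigorous, and thereby completing the adaptation of Lemma~\ref{swpright} to a union of two Knuth classes, is where the real work lies; everything else is bookkeeping with $\iDes$ and the reversal symmetry.
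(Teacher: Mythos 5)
Your overall architecture is sound and close to the paper's: the reduction of the second claim via $\Pi^r$ is exactly what the paper does, and the core technique (insert a fractional letter, apply a Knuth move, delete some $x$ from the resulting element of $\fSb_{n+1}(\Pi)$, and pin down $x$ using $\iDes$) is also the paper's. But the gap you flag at the end is genuine, and with your choice of insertion it cannot be closed by the kind of argument you sketch. Following Lemma~\ref{swpright} you insert $n^-$ to the \emph{right} of $n$, so after the Knuth move $\rho$ contains the adjacent factor $n,n^-$. Now delete $x\in\{1,2\}$: in the standardization of $\rho-x$ the original $n$ is still the letter $n$ and the original $n^-$ becomes the letter $n-1$, and they are adjacent in the order $n,n-1$. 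Hence $n-1\in\iDes(\rho-x)$ --- which is exactly what membership in $K(T)$ demands, since $n-1\in\Des(T)$. So the $i$-descent obstruction you use everywhere else evaporates precisely in the case $x\in\{1,2\}$. Nor does a ``third small letter'' rescue it: for $x=1$ the letters $1,2$ of $\rho-1$ are the originals $2,3$, so the position-$1$ condition for $\rho-1\in K(T)$ becomes ``$1\in\Des(T)$ iff $2\in\Des(S)$,'' which the hypotheses of the lemma do not control; similarly for $x=2$. Thus you cannot conclude $\overrightarrow{\sigma}\in\Pi$; the deletion may legitimately land in $K(T)$, and your iteration stalls at the first step.

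The paper's proof avoids this trap by choosing a different letter and inserting it on the other side of $n$. It takes $\pi\in K(S)$ with the position $i$ of $n$ maximal, inserts $(n-1)^-$ immediately \emph{before} $n$, and Knuth-moves to get $\rho=\ldots,n-1,\ldots,(n-1)^-,\pi_{i+1},n,\ldots$ (here $n-1$ lies left of position $i$ because $n-1\notin\Des(S)$). If $\rho-x\in K(T)$, then $x\in\{1,2\}$ by the same $1\in\Des(S)\triangle\Des(T)$ argument you use; but for such $x$ the original $n-1$ is still the letter $n-1$ of $\rho-x$ and lies to the \emph{left} of $n$, so $n-1\notin\iDes(\rho-x)$, contradicting $n-1\in\Des(T)$. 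Hence $\rho-x\in K(S)$ always; moreover $x\neq n$ (else the letters $n,n-1$ of $\rho-n$ are the originals $n-1,(n-1)^-$ in that order, giving $n-1\in\iDes(\rho-n)$ and contradicting $n-1\notin\Des(S)$), so $\rho-x$ is an element of $K(S)$ whose largest letter sits strictly to the right of position $i$, contradicting maximality of $i$. In short, the missing idea is to place the fractional letter with value just below $n-1$ on the left of $n$, so that deleting a small letter is killed by $n-1\in\Des(T)$; your right-insertion of $n^-$ makes exactly those deletions invisible to $\iDes$.
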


\begin{proof}
We first prove the existence of such a $\pi\in K(S)$.  Choose $\pi\in K(S)$ that maximizes $i$ where $\pi_i=n$.  Towards a contradiction, assume $i<n$ so that $\pi_{i+1}\leq n-2$ since $n-1\not\in\Des(S)$. Now add $(n-1)^-$ in position $i$ and use it to interchange $n$ and $\pi_{i+1}$ via a Knuth move to obtain
$$\rho:=\ldots, n-1, \ldots, (n-1)^-,\pi_{i+1},n,\ldots\in \fSb_{n+1}(\Pi).$$
Let $x$ be such that $\rho-x\in \Pi$.  We claim that $\rho-x\notin K(T)$.  For if this were to occur we must take $x=1$ or $2$ since $1\in \Des(S)\triangle \Des(T)$. But because $n\geq 4$, we would then have $n-1\notin \iDes(\rho-x)$ whereas $n-1\in \Des(T)$.  We conclude that $\rho-x\in K(S)$.  

Next observe that $x\neq n$ as otherwise $n-1\in \iDes(\rho-x)$ whereas $n-1\notin \Des(S)$. Consequently, $n$ sits in position $i+1$ or $i+2$ in $\rho-x\in K(S)$ depending on whether $x$ is to the left or the right of $n$ in $\rho$ respectively. This contradicts our choice of $\pi$ proving the first claim in this lemma.  

The above argument, when applied to $\Pi^r = K(S^r)\cup K(T^r)$ establishes the second claim and completes our proof.  
\end{proof}

\begin{proof}[Proof of Theorem~\ref{thm:pairs}]

We first show that (ii) implies (iii).   As $\Pi$ is swap closed, Lemma~\ref{lem:swpclosed} together with the fact that Knuth classes are $i$-descent consistent imply that $\Pi= D_J^{-1}\cup D_L^{-1}$ for some $J,L\sbe[n-1]$.  If $J\neq L$ it further follows that $K(S) = D_J^{-1}$ and $K(T) = D_L^{-1}$.  Hence $J, L$ are as given in Lemma~\ref{lem:ssh}.  If $J=L$ then $S$ and $T$ are an $i$-descent-complete pair and $J,L$ are given by Lemma~\ref{lem:doubles}.  
The implication that (iii) implies (iv) also follows from these two lemmas.

Now assume (iv) with the goal of showing (i).  It follows from Lemmas~\ref{lem:ssh} and~\ref{lem:doubles} that $\Pi = D_J^{-1}\cup D_L^{-1}$ for some $J,L\sbe[n-1]$.  As sets of the form $D_J^{-1}$ are pattern-Knuth closed by Lemma~5.7 in \cite{hps:paq}, (i) follows from Proposition~\ref{union}. 

It remains to show that (i) implies (ii).  Observe that when $\Des(S)=\Des(T)$ the result follows from Theorem~\ref{thm:swpclosed}.  Therefore we may assume $\Des(S) \neq \Des(T)$.  In light of Lemmas~\ref{lem:ssh} and \ref{lem:swpclosed} it suffices to show that $S$ and $T$ are superstandard hooks.  We proceed by induction on $n$.  Since all tableaux are superstandard hooks when $n\leq 3$ we take $n\geq 4$.

First assume that $n-1$ is in neither $\Des(S)$ nor $\Des(T)$. By (repeated application of) Lemma~\ref{swpright} we have
\beq\label{eq:ds in Pi}
\overdoublerightarrow{\pi}\in \Pi
\eeq
for each $\pi\in \Pi$.  By Lemma~\ref{pkcinduct}, $\widehat\Pi$ is pattern-Knuth closed. It also follows from (\ref{eq:ds in Pi}) that $n$ must be in the top row of either $S$ or $T$.  We consider two cases.  If $n$ is in the top row of both $S$ and $T$ then it is the last element of both $\rho(S)$ and $\rho(T)$.  By Lemma~\ref{lem:RS and ds} see that 
\beq\label{eq:pi hat}
\widehat\Pi = \widehat{K(S)} \cup \widehat{K(T)} = K(\widehat S)\cup K(\widehat T).
\eeq
By induction we conclude that  $\widehat S$ and $\widehat T$ are superstandard hooks. As $n$ is in the top rows of $S$ and $T$ we further see $S$ and $T$ are hook shape.  Finally, Lemma~\ref{PairsHooks} implies that $S$ and $T$ are both superstandard hooks. 

Next assume that $n$ is in the top row of $S$ but not $T$.  Set $\rho = \widehat{\rho(T)},n$ so that $\rho\in \Pi$ by (\ref{eq:ds in Pi}).  Let $R$ be the insertion tableau of $\rho$.  Clearly $R = S$  or  $T$, but as  $n$ is in the top row $R$ we conclude $R=S$.  Additionally, notice that $\rho$ is the reading word for $R$ and that $R$ is obtained by moving $n$ in $T$ to the top row of $T$. As $n-1\notin \Des(T)$ it follows that $\Des(T) = \Des(R) = \Des(S)$.  This contradicts the fact that $\Des(S) \neq \Des(T)$ and so we conclude that this case cannot occur.

At this point we may assume that $n-1\in \Des(S)\triangle \Des(T)$ for if $n-1$ is in both sets then repeating the above argument on $\Pi^r = K(S^r)\cup K(T^r)$ disposes of that case.  In fact, we can  assume even more.  As Knuth moves commute with complementation we see that $\Pi^c$ is the union of Knuth classes and is pattern-Knuth closed.  Therefore the above argument, when applied to $\Pi^c$, deals with the cases when  $1\notin \Des(S)\triangle \Des(T)$.  In what remains we assume $1,n-1\in \Des(S)\triangle\Des(T)$.  

Without loss of generality assume $n-1\notin \Des(S)$ and $n-1\in \Des(T)$.  It now follows from Lemma~\ref{lem:1n} that there exists $\pi\in K(S)$ and $\sigma\in K(T)$ so that $\pi_n = n$ and $\sigma_1= n$.  Hence Lemma~\ref{lem:RS and ds} applied to $K(S)$ and $K(T)^r$ tells us that for each $\zeta\in K(S)$ and $\xi\in K(T)$ we have
$$\overdoublerightarrow{\zeta} \in K(S)\quad\textrm{and}\quad \overdoubleleftarrow{\xi}\in K(T)$$  
and that equation~(\ref{eq:pi hat}) holds here as well. Lemma~\ref{pkcinduct} now gives us that $\widehat\Pi$ is pattern-Knuth closed.  As $1\in \Des(\widehat S)\triangle \Des(\widehat T)$ we see that $\Des(\widehat S) \neq \Des(\widehat T)$ and so we may conclude by induction that $\widehat{S}$ and $\widehat{T}$ are superstandard hooks.  As $\pi_n=n$ and $\sigma_1=n$ it follows that $S$ and $T$ are hook shape.  Finally, Lemma~\ref{PairsHooks} implies that $S$ and $T$ are superstandard hooks as needed.  
\end{proof}

 \section{Stability}
\label{sta}

In Question~7.1 in~\cite{hps:paq}, the authors ask if $Q_n(\Pi)$ being symmetric or Schur nonnegative for  $n$ up to some bound would force it to continue to be so for all $n$.  We now show the converse of this question is false by showing that $Q_n(\Pi)$ can be Schur nonnegative for all sufficiently large $n$ without being so for some smaller value of $n$.  In particular, we show this is true for
\beq
\label{Pi0}
\Pi_0=K(3,1,1)-K(P_0)
\eeq
where
$$
P_0=\raisebox{4mm}{\begin{ytableau}
1&2&4\\
3\\
5
\end{ytableau}}\ .
$$

We need the following result. 
\ble[\cite{hps:paq}]
\label{pkc:M}
The set $\Pi$ is pattern-Knuth closed if and only if $\fS_n(\Pi)$ is a union of Knuth classes for $n\le M+1$ where $M$ is the maximum length of a permutation in $\Pi$.\hqed
\ele

We also need the following criterion.
\ble
\label{stable}
Given $\Pi$ and $\Pi'$  nonempty sets of permutations we let
$$
M=\max\{\#\pi \mid \pi\in\Pi\cup\Pi'\}
$$
where $\#\pi$ is the length of $\pi$.  If there is an $N\ge M$ such that
$$
\fS_N(\Pi)=\fS_N(\Pi')
$$
then
$$
\fS_n(\Pi)=\fS_n(\Pi')
$$
for all $n\ge N$.
\ele
\bprf
It suffices to prove that if $\si\in\fS_n$ contains a copy of some $\pi\in\Pi$ then $\si$ also contains a $\pi'\in\Pi'$ as the converse statement follows by symmetry.  Since $n\ge N\ge M$, there is a subsequence $\tau$ of $\si$ of length $N$ containing $\pi$.
Since $\fS_N(\Pi)=\fS_N(\Pi')$ we have that $\tau$ must also contain a copy of some $\pi'\in\Pi'$.  So $\si$ contains $\pi'$ and we are done.
\eprf

Now consider $K(3,1,1)$.  One can check by computer that $\fS_n(K(3,1,1))$ is a union of Knuth classes for $n\le 6$.  It follows from Lemma~\ref{pkc:M} the $K(3,1,1)$ is pattern-Knuth closed so that $Q_n(K(3,1,1))$ is Schur nonnegative for all $n$.

By contrast, using the computer again, we see that $Q_6(\Pi_0)$ is not even symmetric where $\Pi_0$ is defined by~\ree{Pi0}.  On the other hand,  another computer check shows that 
$\fS_7(\Pi_0)=\fS_7(K(3,1,1))$.  So, by Lemma~\ref{stable}, we conclude that $\fS_n(\Pi_0)=\fS_n(K(3,1,1))$ for $n\ge7$.  It follows from the previous paragraph that $Q_n(\Pi_0)$ is Schur nonnegative for $n\ge7$.



\end{document}